\DeclareFontFamily{U}{BOONDOX-calo}{\skewchar\font=45 }
\DeclareFontShape{U}{BOONDOX-calo}{m}{n}{
  <-> s*[1.05] BOONDOX-r-calo}{}
\DeclareFontShape{U}{BOONDOX-calo}{b}{n}{
  <-> s*[1.05] BOONDOX-b-calo}{}
\DeclareMathAlphabet{\mathcalboondox}{U}{BOONDOX-calo}{m}{n}
\SetMathAlphabet{\mathcalboondox}{bold}{U}{BOONDOX-calo}{b}{n}
\DeclareMathAlphabet{\mathbcalboondox}{U}{BOONDOX-calo}{b}{n}
\newcommand{\Y}{\mathcalboondox{Y}}
\numberwithin{equation}{section}
\theoremstyle{definition}
\newtheorem{teo}{Theorem}[section]
\newtheorem{conjecture}{Conjecture}[section]
\newtheorem{lema}{Lemma}[section]
\newtheorem{prop}{Proposition}[section]
\newtheorem{deft}{Definition}[section]
\newtheorem{cor}{Corollary}[section]
\newtheorem*{rmk}{Remark}
\DeclareMathOperator\Area{Area}
\DeclareMathOperator\diff{d}
\DeclareMathOperator*{\esssup}{ess\,sup}
\newcommand{\C}{\mathbb{C}}
\newcommand{\D}{\mathbb{D}}
\newcommand{\h}{\mathbb{H}}
\newcommand{\Q}{\mathbb{Q}}
\newcommand{\R}{\mathbb{R}}
\newcommand{\s}{\mathbb{S}}
\newcommand{\Z}{\mathbb{Z}}
\title{David regularity of the Yoccoz extension}
\author{Luna Lomonaco, Lucas Oliveira, Miguel Ratis Laude}
\begin{document}

\maketitle
\begin{abstract}
    A central problem in the study of critical circle dynamics is understanding the regularity of Yoccoz conjugators—circle homeomorphisms that conjugate critical circle maps with irrational rotation numbers to their corresponding rigid rotations. One can approach this problem from a different angle by studying the regularity of extensions of these maps to the unit disk. Of particular interest is the question of when such a conjugator admits a David extension. Building on the work of Petersen and Zakeri, we classify the David regularity of a specific extension process known as the Yoccoz extension.
\end{abstract}

\section{Introduction}

Let $f:\s^{1}\to\s^{1}$ be an orientation preserving circle homeomorphism.
Its rotation number  $\rho(f)$ is defined as  
$$\rho(f)= \lim_{n \rightarrow \infty} \frac{F^n(x)-x}{n},$$
where $F:\R\to\R $ is a lift of $f$: an increasing homeomorphism of the real line such that for all $x \in \R,\,\pi \circ F(x)=f\circ \pi(x)$, where $\pi(x)=e^{2\pi i x}$.
Denote by $R_\theta: \s^1 \rightarrow \s^1$ the rigid rotation by $\theta$.
Denjoy proved in \cite{Denjoy1932} that, if $f:\s^{1}\to\s^{1}$ is an orientation preserving $C^1$-diffeomerphism with derivative of bounded variation, and with $\rho(f)\in \R\setminus \Q$, then there exists a  homeomorphism $h:\s^{1}\to\s^{1}$ conjugating $f$ with $R_{\rho(f)}$.
In \cite{Yoccoz1984} Yoccoz extended Denjoy's result to real analytic critical circle maps: real analytic maps $f:\s^{1}\to\s^{1}$ with at least one critical point, this is with at least a point $\omega\in\s^{1}$ such that $f'(\omega)=0$. More precisely, Yoccoz proved 
 that, if $f:\s^{1}\to\s^{1}$ is a 
real analytic critical circle map with $\rho(f)\in \R\setminus \Q$, then there exists a  homeomorphism $h:\s^{1}\to\s^{1}$ conjugating $f$ with $R_{\rho(f)}$.
In \cite{Herman} Herman improved the regularity of the conjugating homeomorphism for critical real analytic circle maps with rotation number satisfying some arithmetic property. Specifically, Herman proved that,  if $f$ is  real analytic critical circle map with $\rho(f)\in \R\setminus \Q$, then
\begin{equation}\label{intro1}
    \rho(f)\in BT \iff f=h^{-1}\circ R_{\rho(f)}\circ h \mbox{ with }h\in QS(\s^{1}) \,,
\end{equation}
with $\rho(f)\in BT$ when, denoting as $[a_0,a_1,a_2,\dots]$ the continued fraction expansion of $\rho(f)$, we have 
$$
\sup a_{n}<\infty\,,
$$
and with $h\in QS(\s^{1})$ if $h$ is a \emph{quasisymmetric homeomorphism}: an orientation preserving homeomorphism of
$\s^{1}$
for which there exists a $K\geq 1$ such that, for all $x \in\R$ and $t>0$, we have 
$$\frac{1}{K}\leq \frac{H(x+t)-H(x)}{H(x)-H(x-t)}\leq K\,,$$
where $H:\R\to\R$ is a lift of $h$. In \cite{Swiatek1998}, \'{S}wi\c{a}tek extends Herman's result to circle homeomorphisms with a geometric control of some iterated cross ratios and a notion of lateral criticality.\\

Quasiymmetric homeomorphisms can be regarded as the boundary values of quasiconformal mappings (see \cite{Beurling-Ahlfors1956}). An orientation preserving homeomorphism $\phi:U \subseteq \C \rightarrow V \subseteq \C$ is said to be quasiconformal if  $\phi$ is in the Sobolev space $W^{1,2}(U)$ of functions whose first-order distributional derivatives are in $L^2(U)$, and there exists $k<1$ such that, setting $\mu_{\phi}(z):=\frac{\overline{\partial}\phi(z)}{\partial\phi(z)}$, we have $|\mu_{\phi}(z)|\leq k$ a.e. We call $\mu_{\phi}(z)$ the Beltrami coefficient of $\phi$ at $z$, 
$$K_{\phi}(z)=\frac{|\mu_{\phi}(z)|+1}{|\mu_{\phi}(z)|-1}$$
the dilatation (or conformal distortion) of $\phi$ at $z$, and $$K_{\phi}=\mbox{ess sup}_{z \in U}K_{\phi}(z)$$ the dilatation of $\phi$. Note that, when $\phi$ is quasiconformal, $K_{\phi}<\infty$.
The set of quasiconformal maps $\phi:U\rightarrow U$ form a group under composition: if $\phi$ is quasiconformal, its inverse is also quasiconformal, with $K_{\phi}=K_{\phi^{-1}}$, and if $\phi$ and $\psi$ are quasiconformal, $\phi\circ\psi$ is quasiconformal with $K_{\phi\circ\psi}\leq K_{\phi}K_{\psi}$.

In \cite{David1988}, David introduced a generalization of quasiconformal maps, the so-called David homeomorphisms, where the dilatation is allowed to explode, but in a controlled way. More precisely, an orientation preserving homeomorphism $\phi:U \subseteq \C \rightarrow V \subseteq \C$ is said to be a David map if  $\phi$ is in the Sobolev space $W^{1,2}(U)$, and there exist constants $\alpha>0$ and $C>0$ such that 
 \begin{equation}
    \mbox{A}_\sigma\left\{z\in U:  K_{\phi}(z) > K\right\}\le C e^{-\alpha K},
    \end{equation}
    where $\mbox{A}_\sigma$ is the spherical area:
    $$\mbox{A}_\sigma(B)=\iint_B \frac{\mbox{dx dy}}{(1+|z|^2)^2}. $$
The set of David maps $\phi:U\rightarrow U$ do not form a group under composition: in particular, the inverse of a David map is not necessarily David. David maps with David inverse are called \textit{biDavid maps} in what follows.  It is important to emphasize that the boundary behavior of David maps is not known: there exist sufficient conditions for a circle homeomorphism to be the boundary value of a David map, and there exist necessary conditions, but the gap between the necessary and the sufficient conditions is definitely not negligible (see \cite{Zakeri2008}).\\

In \cite{Petersen-Zakeri2004} Petersen and Zakeri proved that, if $f$ is  real analytic critical circle map with $\rho(f)\in \R\setminus \Q$ and, denoting as $[a_0,a_1,a_2,\dots]$ the continued fraction expansion of $\rho(f)$, we have  $$\log a_{n}=\mathcal{O}(\sqrt{n}) \mbox{ as } n\to\infty,$$ then the homeomorphism $h:\s^{1}\to\s^{1}$ conjugating $f$ with $R_{\rho(f)}$ admits a David extension $H : \overline{\D}\to \overline{\D}$.
The extension they constructed is the so-called Yoccoz extension $\Y(h):\overline{\h}\to \overline{\h}$, an extension obtained iteratively and based on the dynamical partitions of $f$ and $R_{\rho(f)}$.
They conjectured that the converse should be true, that is, if the normalized linearizing map $h:\s^1\to \s^1$, which satisfies $h \circ f = R_{\rho(f)} \circ h$, admits a David extension $H : \overline{\D}\to \overline{\D}$, then the continuous fraction expansion of $\rho(f)$ satisfies $\log a_{n}=\mathcal{O}(\sqrt{n}) \mbox{ as } n\to\infty$. We prove the conjecture for the extension process they used.\\

More precisely, let $\theta \in \R\setminus \Q$, and $[a_0,a_1,a_2,\dots]$ its continued fraction expansion. Define 
$$BT:=\{\theta \in \R\setminus \Q \,\,|\,\,\sup a_{n}<\infty\,\},$$ 
$$PZ:=\{\theta \in \R\setminus \Q\,\,|\,\,\log a_{n}=\mathcal{O}(\sqrt{n}) \mbox{ as } n\to\infty\},$$
$$\mathcal{A}:=\{ \theta \in \R\setminus \Q\,\,|\,\,\log^{2}a_{n+1}\lesssim \sum_{k=1}^{n}\log(a_{k} +1)\,\}.$$

Note that $BT \subset PZ \subset \mathcal{A}$, and all the inclusions are strict.
It follows from the work of Herman and \'{S}wi\c{a}tek that
$$\rho(f) \in BT \mbox{ if, and only if, } \Y(h) \mbox { is a quasiconformal mapping}.$$
Here, we prove the following:
\begin{teo}\label{teo.main}
Let $f:\s^{1}\to\s^{1}$ be a critical circle map with $\rho(f)\in \R\setminus \Q$, and let $F:\R\to \R$ be a lift of it. Let $h$ denote the lift of the normalized linearizing map $h:\R\to \R$, which satisfies $h \circ F = \hat R_{\rho(f)} \circ h$, where $\hat R_{\rho(f)}(z)=z+\rho(f)$. Let $\Y(h):\overline{\h}\to \overline{\h}$ be the Yoccoz extension of $h$. Then:
\begin{itemize}
    \item [a]. $\rho(f) \in PZ$ if, and only if, $\Y(h)$ is a David mapping;
    \item [b]. $\rho(f) \in \mathcal{A}$ if, and only if, $\Y(h)^{-1}$ is a David mapping.
\end{itemize}
\end{teo}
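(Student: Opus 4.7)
The plan is to reduce the David regularity of $\Y(h)$ and $\Y(h)^{-1}$ to a sharp comparison, carried out on each piece of a dynamical partition of $\overline{\h}$, between the local quasiconformal dilatation and the local spherical area. The asymmetry between parts (a) and (b) then arises because the level-$n$ pieces on the \emph{domain} side of $\Y(h)$ (the dynamics of $f$, where real bounds hold) have a very different geometry from those on the \emph{image} side (the dynamics of $R_{\rho(f)}$, with its unbounded Diophantine geometry).

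Following the construction in \cite{Petersen-Zakeri2004}, we first decompose $\overline{\h}$ as a disjoint union of boxes $B_{n,j}^{f}$, indexed by renormalisation level $n$ and combinatorial position $j$, sitting above each interval of the level-$n$ dynamical partition $\mathcal{P}_n^{f}$ of $f$; the Yoccoz extension sends $B_{n,j}^{f}$ quasiconformally onto a matching box $B_{n,j}^{R}$ above the level-$n$ dynamical partition $\mathcal{P}_n^{R}$ of $R_{\rho(f)}$ by an explicit interpolation rule. The technical core of the proof is then to establish two families of sharp estimates. First, a two-sided bound on the local dilatation of the form $K_{n,j}:=\esssup_{B_{n,j}^{f}}K_{\Y(h)}\asymp\varphi(a_{n+1})$, for an explicit function $\varphi$ dictated by the extension recipe, with the upper bound achieved on a subset of $B_{n,j}^{f}$ of definite relative area. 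Second, sharp two-sided spherical-area estimates for $\bigcup_{j}B_{n,j}^{f}$ and for $\bigcup_{j}B_{n,j}^{R}$: on the $f$-side the real bounds of Herman and de Faria--de Melo force bounded geometry, which pins down the total area of the level-$n$ boxes; on the $R_{\rho(f)}$-side the area at level $n$ is governed by $1/q_{n}\asymp\exp\bigl(-\sum_{k\le n}\log(a_{k}+1)\bigr)$, directly from the explicit rotation geometry.

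With these estimates in hand, Theorem~\ref{teo.main} becomes a bookkeeping exercise. Writing
$$A_\sigma\{K_{\Y(h)}>K\}\asymp\sum_{(n,j)\,:\,K_{n,j}>K}A_\sigma\bigl(B_{n,j}^{f}\bigr),$$
the ``$\Rightarrow$'' direction of (a) is Petersen--Zakeri, while the ``$\Leftarrow$'' direction combines the dilatation lower bound with the $f$-side area estimate to convert the David condition into $\log a_{n+1}=O(\sqrt{n})$, i.e.\ $\rho(f)\in PZ$. Part (b) follows by the same scheme, using the identity $K_{\Y(h)^{-1}}(w)=K_{\Y(h)}(\Y(h)^{-1}(w))$ to transport the level sets to the image side and invoking the $R_{\rho(f)}$-side area estimate; the resulting David condition reads $\log^{2}a_{n+1}\lesssim\sum_{k\le n}\log(a_{k}+1)$, which is the defining condition of $\mathcal{A}$.

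The main obstacle is the two-sided dilatation estimate. Petersen--Zakeri supply the upper bound on $K_{n,j}$, hence the sufficient implications in both (a) and (b) almost immediately, but the converses demand exhibiting inside each box a region of definite relative area on which the quasiconformal interpolation genuinely pinches at the rate $\varphi(a_{n+1})$. This rules out unlikely cancellations that would otherwise allow $\Y(h)$ or $\Y(h)^{-1}$ to be David under a weaker arithmetic hypothesis. A secondary but essential technical point is the clean separation of the two area scalings on the $f$-side and on the $R_{\rho(f)}$-side; this separation is what ultimately distinguishes $PZ$ from $\mathcal{A}$ in the David regime.
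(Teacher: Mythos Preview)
Your plan is correct and matches the paper's approach essentially point for point: the paper also decomposes $\overline{\h}$ into cells of dynamically defined grids $\Gamma,\Gamma'$, proves the two-sided dilatation estimate $K_{\Y(h)}|_Q\asymp\log^2 a_{n+1}$ (your $\varphi$) with the lower bound attained on a subset of definite relative area (Lemmas~\ref{lema.area_proportion_bound}, \ref{lema.area_proportion_bound_rotation}), and pairs this with the $f$-side area bound $\varepsilon^{2n}\lesssim\Area(Q)\lesssim\sigma^n$ from the real bounds and the $R_\theta$-side bound $\Area(Q')\asymp q_n^{-2}$ together with $\log q_n\asymp\sum_{k\le n}\log(a_k+1)$. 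The only item you leave implicit that the paper makes explicit is the identification $\varphi(a)=\log^2 a$, which comes from analysing the piecewise M\"obius interpolant in the extension recipe (Lemma~\ref{lema.strebel} and Corollary~\ref{cor.optimal_distortion_interpolation}).
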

We obtain as a corollary that the David extension $H : \overline{\D}\to \overline{\D}$ of the homeomorphism $h:\s^{1}\to\s^{1}$ conjugating $f$ with $R_{\rho(f)}$ in Petersen and Zakeri Theorem (see \cite{Petersen-Zakeri2004}) is actually biDavid (see Corollary \ref{cor}).\\

We then generalize the result to strong David maps and maps of finite distortion (see Section \ref{SG} for definitions). More precisely, we find the necessary and sufficient arithmetic condition for $\Y(h)$ and $\Y(h)^{-1}$ to be strong David or of finite distortion (see Corollary \ref{SD} and Corollary \ref{FD}). We end the section with a lower bound on the quasiconformal dilatation of the piece-wise affine approximations of the conjugator at level $n$, which allows us to compare the Yoccoz extension with other classical ones (see Proposition \ref{qs}).\\

\textbf{Acknowledgements.}
The first author was supported partially by the Serrapilheira Institute (grant number Serra-1811-26166), the FAPERJ - Fundação Carlos Chagas Filho de Amparo à Pesquisa do Estado do Rio de Janeiro (grant number JCNE - E26/201.279/2022 and JCM - E-26/210.016/2024), the ICTP through the Associates Programme and from the Simons Foundation through grant number 284558FY19. The second author was partially supported by the Office of Naval Research through grant GRANT14201749 (award number N629092412126). The third author was supported by Capes.

The authors would also like to thank Pablo Guarino and João Pedro Ramos for fruitful conversations that lead to improvements of the results presented here.

\section{Preliminaries: the Yoccoz Extension}
In this section, following Petersen and Zakeri, we will carefully introduce the Yoccoz extension. The concept behind it is really natural and, as we will see, the extension carries all the information needed to understand the arithmetic properties of the rotation number and the regularity of the conjugator (this last property in an intrinsic way).

In what follows, let $F: \s^1 \to \s^1$ be an analytic critical homeomorphism of the circle, normalized to have its critical point at $1$. We assume the rotation number $\theta$ of $F$ is irrational. By a theorem of Yoccoz \cite{Yoccoz1984}, there exists a homeomorphism $H:\s^1\to \s^1$ that conjugates $F$ with the rigid rotation $R_\theta$ by $\theta$; we shall call it the \textit{Yoccoz conjugator} or the \textit{linearizing map}. As $\theta$ is irrational, it admits a representation in continued fractions:
\[ \theta = \frac{1}{a_1 + \frac{1}{a_2 + \dots}} =: [a_1, a_2, \dots] \]
and we shall denote by $p_n/q_n$ (in lowest terms) the approximant
\[ \frac{p_n}{q_n} := [a_1, \dots, a_n] = \frac{1}{a_1 + \frac{1}{\dots + \frac{1}{a_n}}}. \]
This means that the terms $q_n$ are the \textit{closest return times} for $F$: given any $x \in \s^1$, the point $F^{-q_n}(x)$ is the closest to $x$ out of all points $F^{-m}(x)$, with $m < q_{n+1}$.

It will be simpler throughout most of the text to work with a \textit{lift} of $F$ to the real line --- so a map $f:\R \to \R$ such that $f(x) = F(e^{2\pi ix})$. Thus $f$ is an analytic homeomorphism of the real line, satisfying $f(x + 1) = f(x) + 1$, with a critical point at $0$ (indeed, at any integer). The Yoccoz conjugator also lifts to a homeomorphism $h:\R \to \R$ conjugating $f$ with translation $T_\theta$ by $\theta$.

As to fix notation, given two sequences $(A_n)_{n\geq 1}$ and $(B_n)_{n\geq 1}$, we will say they are \textit{comparable}, and denote $A_n \asymp B_n$, when there exist index $n_0 \geq 1$ and constant $C \geq 1$, not depending on $n$, for which
\[ \frac{1}{C}A_n \leq B_n \leq C A_n, \ \forall n \geq n_0. \]
If we only have that $B_n \leq C A_n$ for all $n \geq n_0$, we will denote $B_n \lesssim A_n$ (this is the same thing as writing $B_n = \mathcal{O}(A_n)$). More generally, we might be dealing with collections of values $(\mathcal{A}_n)_{n\geq 1}$ and $(\mathcal{B}_n)_{n\geq 1}$, instead of a single sequence (e.g. the lengths of intervals of finer and finer partitions); in those situations, we will denote $A \asymp B$ (and similarly $A \lesssim B$) if the comparison constant is independent of $n$ and of the specific elements $A \in \mathcal{A}_n, B \in \mathcal{B}_n$ we pick.

\subsection{Modified dynamical partitions}

The main idea for obtaining the Yoccoz extension of $h$ is to obtain \textit{dynamically defined grids} on $\h$, with good geometric control for each of their cells, and attempt an extension at each cell. For that, we start by considering the following dynamical partition:
\[ \mathcal{Q}_n := \{ f^{-j}(0) \ | \ 0\leq j < q_n \} + \Z. \]
The partition $\mathcal{Q}_n$ projects to the circle, partitioning it like
\[ \s^1 = \left( \bigcup_{j = 0}^{q_n - q_{n-1} - 1}[F^{-j-q_{n-1}}(1), F^{-j}(1)] \right) \cup \left( \bigcup_{j=0}^{q_{n-1} - 1}[F^{-j}(1), F^{-j-q_n+q_{n-1}}(1)] \right). \]
Indeed, the intervals of the form $[F^{-j-q_{n-1}}(1), F^{-j}(1)]$ are the closest return intervals of time $q_{n-1}$, meaning no other point of the partition lies in between them, while the intervals of the form $[F^{-j}(1), F^{-j-q_n+q_{n-1}}(1)]$ are the unions of two intervals: $[F^{-j}(1), F^{-j-q_n}(1)]$, a closest return interval of time $q_n$; and $[F^{-j-q_n}(1), F^{-j-q_n+q_{n-1}}(1)]$, a closest return interval of time $q_{n-1}$. Since the middle point $F^{-j-q_n}(1)$ is not in the partition, the other two points are the closest to each other in $\mathcal{Q}_n$. One can now verify that, when refining into $\mathcal{Q}_{n+1}$, the intervals above are partitioned into new ones at the points
\[ F^{-j-q_{n-1}}(1), F^{-j-q_{n-1}-q_n}(1), \dots, F^{-j-q_{n-1}-(a_{n+1}-1)q_n}(1) = F^{-j-q_{n+1}+q_n}(1), F^{-j}(1) \]
in the first case and
\[ F^{-j}(1), F^{-j-q_n}(1), \dots, F^{-j-a_{n+1}q_n}(1) = F^{-j-q_{n+1}+q_n}(1), F^{-j-q_n+q_{n-1}}(1) \]
in the second case; see Figure \ref{fig:dynamical-partition}. Notice that they are laid in that order by being closest return times. This shows us that, if one takes $t, s \in \mathcal{Q}_n$ two adjacent points and write
\[ [t, s]\cap \mathcal{Q}_{n+1} := \{ t = t_0 < t_1 < \dots < t_k = s \} \]
then we have $k = a_{n+1}$ or $a_{n+1} + 1$, depending on the type of interval.

We recall that the usual dynamical partition $\mathcal{P}_n$ is given by
\[ \mathcal{P}_n := \{ f^{-j}(0) \ | \ 0 \leq j < q_n + q_{n-1} \} + \Z. \]
The intervals defined by $\mathcal{P}_n$ also come in two forms (again projecting down to $\s^1$): $[F^{-j-q_{n-1}}(1), F^{-j}(1)]$, for $0 \leq j < q_n$, and $[F^{-j}(1), F^{-j-q_n}(1)]$, for $0 \leq j < q_{n-1}$ (see \cite{deFaria-Guarino}). Notice that the first $q_n - q_{n-1}$ intervals of $\mathcal{Q}_n$ --- the ones of the form $[F^{-j-q_{n-1}}(1), F^{-j}(1)]$ --- are included in the first type of interval of $\mathcal{P}_n$; the other $q_{n-1}$ intervals of $\mathcal{Q}_n$ are unions of two intervals of $\mathcal{P}_n$. Indeed, the remaining intervals of first type have indices $q_n - q_{n-1}\leq j < q_n$, and so can be rewritten as
\[ [F^{-j-q_n+q_{n-1}}(1), F^{-j-q_n}(1)], \ 0 \leq j < q_{n-1}, \]
which, when joined with the $q_{n-1}$ intervals of the second type, will form the remaining intervals of $\mathcal{Q}_n$. This means that we can transfer certain properties of $\mathcal{P}_n$ to $\mathcal{Q}_n$, for instance the following \textit{a priori} bounds (originally from \cite{Herman1987, Swiatek1988}, see also Theorem 3.1 and its proof in \cite{deFaria-deMelo1999}):

\begin{figure}
    \centering
    \includegraphics[width=0.475\linewidth]{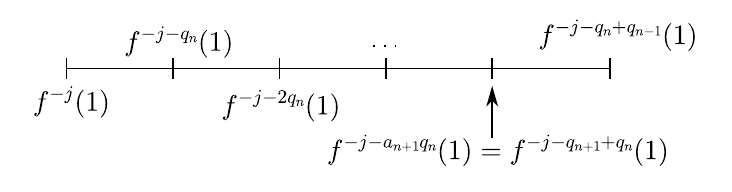}
    \hspace{0.5cm}
    \includegraphics[width=0.475\linewidth]{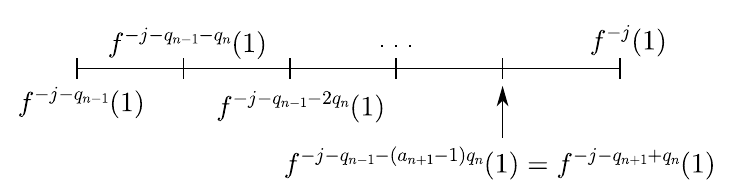}
    \caption{The two different kinds of intervals in the partiton $\mathcal{Q}_n$. The intermediate points are closest returns of time $q_n$.}
    \label{fig:dynamical-partition}
\end{figure}

\begin{teo}[Herman-\'{S}wi\c{a}tek; Real \textit{a priori} bounds]\label{teo.aprioribounds}
    
    Let $f$ be a $\mathcal{C}^r$ critical circle map, with $r \geq 3$, and $\mathcal{P}_n$ be the associated dynamical partition. Then the followiong hold:
    \begin{itemize}
        \item if $I, J$ are adjacent intervals determined by $\mathcal{P}_n$, then $|I| \asymp |J|$ (meaning the comparison constant does not depend on $n$ nor the intervals);
        \item if $I_n := [0, f^{-q_n}(0) + p_n]$, then $|I_n| \asymp |I_{n+1}|$.
    \end{itemize}
    
\end{teo}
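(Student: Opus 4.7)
This is the classical Herman--\'{S}wi\c{a}tek real bounds theorem. The central analytic input is a cross-ratio distortion inequality for $C^3$ maps, and the central combinatorial input is the bounded multiplicity of iterates governed by the closest-return structure. I note first that the second assertion is a direct consequence of the first: in the partition $\mathcal{P}_{n+1}$, the two atoms immediately to the left and right of the critical point $0$ (in the lift) are exactly $I_n$ and $I_{n+1}$, since the nearest points of $\mathcal{P}_{n+1}$ to $0$ are by definition the closest returns of times $q_n$ and $q_{n+1}$, which lie on opposite sides of $0$ by the standard sign alternation of the continued fraction convergents. Thus $I_n$ and $I_{n+1}$ are adjacent atoms of $\mathcal{P}_{n+1}$, and the comparison follows from the first assertion. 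The bulk of the work is therefore proving that adjacent atoms of $\mathcal{P}_n$ have comparable lengths.

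\textbf{Main steps.} I would begin with \'{S}wi\c{a}tek's cross-ratio inequality: for four ordered points $a<b<c<d$ and the standard cross-ratio $\mathrm{Cr}(a,b,c,d)=\frac{(c-a)(d-b)}{(b-a)(d-c)}$, a $C^3$ diffeomorphism $g$ distorts $\mathrm{Cr}$ multiplicatively by a factor controlled by $\|g\|_{C^3}$ and the length of the enclosing interval. The refinement that makes this usable for iterates is: if $g=f^N$ is restricted to an interval $T$ whose forward images $T,f(T),\ldots,f^{N-1}(T)$ cover each point of $\s^1$ at most $M$ times, then the accumulated distortion factor is bounded by a constant of the form $C(f)^M$, \emph{independent} of $N$. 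The closest-return combinatorics supply such a uniform multiplicity bound for $N\leq q_n+q_{n-1}$, because the atoms of $\mathcal{P}_n$ tile $\s^1$. To compare two adjacent atoms $I,J$ of $\mathcal{P}_n$ I would push $I\cup J$ by a suitable iterate into a fixed reference region near the critical point, where the analytic normal form $f(x)\approx c\,x^{d+1}$ gives direct comparability of adjacent intervals by an explicit calculation; the cross-ratio inequality then transports this comparability back to $I,J$ with a constant depending only on $f$.

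\textbf{Main obstacle.} The principal technical difficulty is that the critical point itself breaks the $C^3$ distortion estimate, since ordinary Koebe-type bounds require a genuine diffeomorphism. The resolution is to decompose each high iterate into (i) diffeomorphic pieces that avoid the critical orbit, handled by the cross-ratio inequality, together with (ii) at most one passage through the critical point, handled by the explicit power-law normal form; alternatively, one can invoke \'{S}wi\c{a}tek's variant of the cross-ratio inequality that already builds in a single critical crossing. The combinatorics of closest returns guarantee at most one critical visit in each window of length $q_n$, which is exactly what makes this work. This single critical crossing is also the reason the comparison constants remain uniform in $n$ for arbitrary irrational rotation number, rather than degenerating with the partial quotients $a_n$ as they would for a smooth diffeomorphism: the power-law contraction near $0$ exactly counteracts the potential stretching accumulated over $a_{n+1}$ first-return loops.
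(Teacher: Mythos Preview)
The paper does not actually prove this theorem: it is quoted as a classical result from the literature, with references to Herman, \'{S}wi\c{a}tek, and the exposition in de~Faria--de~Melo, and is used as a black box throughout. So there is no ``paper's own proof'' to compare against.

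That said, your sketch is the standard route and is correct in outline. Your reduction of the second item to the first is right: in the lift, the two atoms of $\mathcal{P}_{n+1}$ that share the critical point $0$ as an endpoint are exactly $I_n$ and $I_{n+1}$, by the closest-return property and the sign alternation of the convergents, so $|I_n|\asymp |I_{n+1}|$ follows immediately from the first item applied at level $n+1$. For the first item, the ingredients you name --- the cross-ratio distortion inequality, the bounded-multiplicity combinatorics of the dynamical partition, and the decomposition of a long iterate into a diffeomorphic piece plus a single critical crossing handled by the power-law normal form --- are exactly those of the classical proofs cited by the paper. One small caveat: the criticality assumption in the paper is ``analytic critical circle map'' (with the statement quoted for $C^r$, $r\geq 3$), so the critical point is of odd power type; your ``$f(x)\approx c\,x^{d+1}$'' normal form is the right local model, but the usual references phrase the argument entirely via the (negative-Schwarzian) cross-ratio inequality rather than an explicit power computation at the critical point. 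Either packaging works.
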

\begin{cor}\label{cor.aprioribounds}
    
    There are constants $0 < \varepsilon \leq \sigma < 1$ such that, for any interval $I$ of $\mathcal{P}_n$,
    \[ \varepsilon^n \lesssim |I_n| \leq |I| \lesssim \sigma^n. \]
    
\end{cor}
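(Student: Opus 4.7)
The plan is to read each inequality off directly from one of the two items of Theorem \ref{teo.aprioribounds}, and then glue them together.

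First, for the lower bound $\varepsilon^n \lesssim |I_n|$, I would apply the second item of Theorem \ref{teo.aprioribounds} iteratively. The comparison $|I_n| \asymp |I_{n+1}|$ gives, for some universal $\varepsilon\in(0,1)$ and $n_0\ge 1$, the one-step estimate $|I_{n+1}|\ge \varepsilon\,|I_n|$ for all $n\ge n_0$. A straightforward induction starting from the positive constant $|I_{n_0}|$ then yields $|I_n|\gtrsim \varepsilon^n$, as claimed.

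Second, for the middle inequality $|I_n|\le |I|$ (understood up to a universal constant), I would use the explicit structure of $\mathcal{P}_n$ given just before the statement, combined with the first item of Theorem \ref{teo.aprioribounds}. The intervals of $\mathcal{P}_n$ lying on the $f$-orbit of the closest return at the critical point are, by iterated adjacency, comparable among themselves, and each other interval of $\mathcal{P}_n$ is adjacent to one of these. Since $I_n$ is (a lift of) the shortest such interval based at the critical point, the real bounds propagate to give $|I_n|\lesssim |I|$ uniformly for every $I\in\mathcal{P}_n$.

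Third, for the upper bound $|I|\lesssim \sigma^n$, I would use a refinement argument. Fix $I\in \mathcal{P}_n$. Passing from $\mathcal{P}_n$ to $\mathcal{P}_{n+2}$, every interval of $\mathcal{P}_n$ gets properly subdivided into at least two subintervals, which by the first item of Theorem \ref{teo.aprioribounds} are comparable to each other as members of $\mathcal{P}_{n+2}$. Hence each new piece is at most a universal fraction $\sigma\in(0,1)$ of the length of its parent, and iterating this bound along the refinement $\mathcal{P}_0\supset \mathcal{P}_2\supset\cdots$ gives $|I|\lesssim \sigma^{n}$.

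The main obstacle I anticipate is the refinement step: going from $\mathcal{P}_n$ to $\mathcal{P}_{n+1}$ inserts $a_{n+1}$ new points into some intervals but none into others (compare the description of the refinement of $\mathcal{Q}_n$ into $\mathcal{Q}_{n+1}$ above), so one cannot expect a one-step shrinking factor that is independent of the $a_n$. The trick is precisely to pass to $\mathcal{P}_{n+2}$ (or even $\mathcal{P}_{n+1}$ after one exploits the $\mathcal{Q}_n$-to-$\mathcal{P}_n$ comparison just recalled in the text) so that every interval is guaranteed to be subdivided, and then to invoke adjacency to convert the presence of a subdivision into a universal contraction ratio $\sigma$.
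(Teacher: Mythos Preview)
Your arguments for the two outer estimates are correct and standard; the paper does not spell them out either. In particular, the two-step refinement $\mathcal{P}_n \to \mathcal{P}_{n+2}$ for the upper bound is exactly the right fix, since the short ($q_n$-return) intervals of $\mathcal{P}_n$ persist unchanged in $\mathcal{P}_{n+1}$.

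The gap is in your treatment of the middle inequality. Your plan is to make all the $q_n$-return intervals mutually comparable by ``iterated adjacency'' via the first item of Theorem~\ref{teo.aprioribounds}, but there are $q_{n-1}$ such intervals, and chaining the adjacency bound across the partition produces a comparison constant of order $C^{q_{n-1}}$, which is not uniform in $n$. So this route does not even yield $|I_n|\lesssim|I|$ with a universal constant. The paper bypasses this entirely: immediately after the statement it simply observes that $I_n$ is (the lift of) the closest-return interval of time $q_n$, and hence $|I_n|\le|I|$ holds as a genuine inequality for every interval $I$ of $\mathcal{P}_n$ --- no real bounds are needed for this step at all. That direct observation is what is missing from your argument.
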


Notice that $I_n$ is the lift of the closest return interval $[1, F^{-q_n}(1)]$ of time $q_n$, and therefore we indeed have $|I_n| \leq |I|$ for all intervals $I$ determined by $\mathcal{P}_n$. These results transfer trivially to $\mathcal{Q}_n$ from the observation that the intervals determined by it are either also determined by $\mathcal{P}_n$ or the union of two adjacent intervals determined by $\mathcal{P}_n$.

\subsection{Grids, cells and the grid map}
We turn to defining the grids now. For each $t \in \mathcal{Q}_n$, let the \textit{height} at level $n$ be $y_n(t) := (t_l - t_r)/2$, with $t_l \in \mathcal{Q}_n$ the point to the left of $t$ and $t_r \in \mathcal{Q}_n$ the point to the right --- i.e. $y_n(t)$ is the average between the two adjacent intervals of the partition at level $n$ that share $t$ as a boundary point. The points $z_n(t) := t + iy_n(t)$ for $t \in \mathcal{Q}_n, n \geq 0$ will then form a grid $\Gamma$ when we connect:
\begin{itemize}
    \item $z_n(t)$ to $z_{n+1}(t)$ with a vertical segment, for each $t \in \mathcal{Q}_n$;
    \item $z_n(t)$ to $z_n(s)$ with a line segment whenever $t, s \in \mathcal{Q}_n$ are adjacent.
\end{itemize}

We may repeat the previous steps, now replacing the function $f$ with the translation $T_\theta$ by $\theta$, to obtain partitions $\mathcal{Q}'_n$, height functions $y'_n$, vertices $z'_n$, and a grid $\Gamma'$. It is clear that $\Gamma$ and $\Gamma'$ are homeomorphic by simply sending $z_n(t)$ to $z_n'(h(t))$ and affinely mapping corresponding segments; see Figure \ref{fig:grid-map}.

\begin{figure}[h!]
    \centering
    \includegraphics[scale = 0.26]{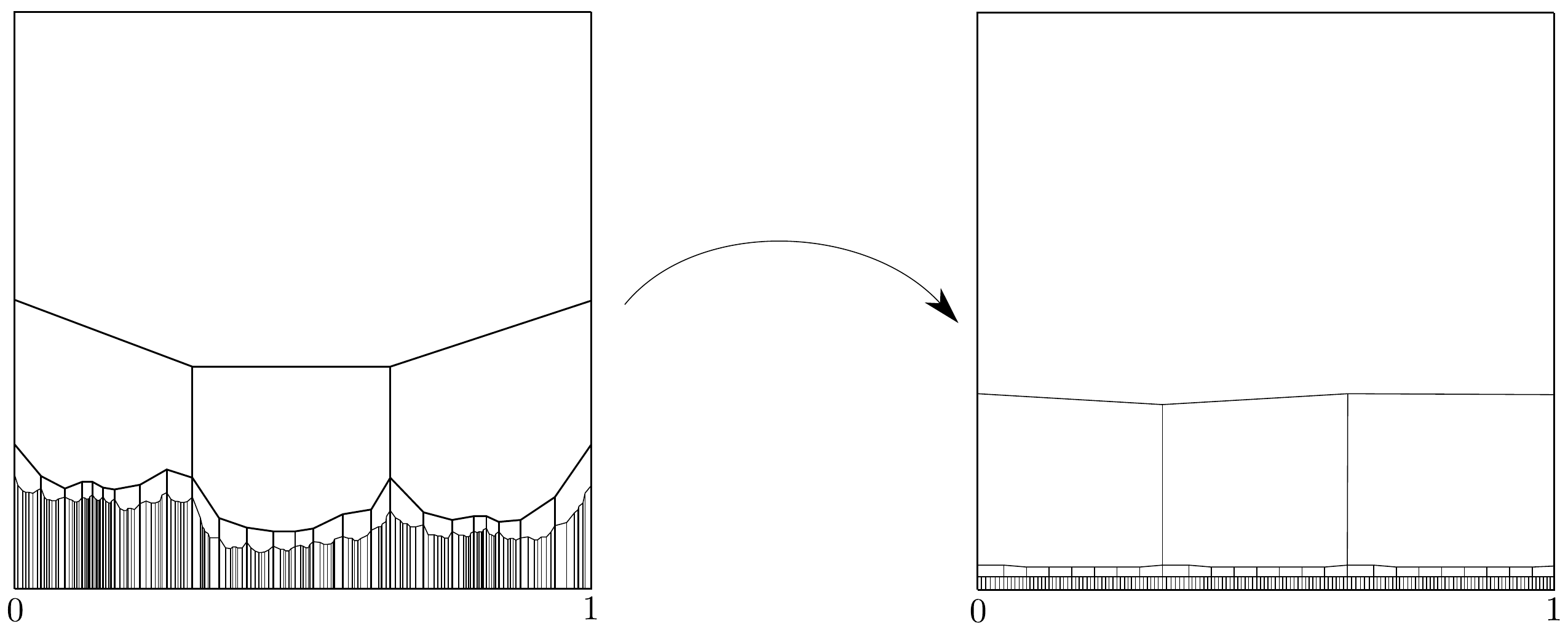}
    \caption{The base grids $\Gamma$ (left) and $\Gamma'$ (right). Each vertex is a point of the form $z_n(x)$ or $z'_n(x)$. Only three levels have been drawn.}
    \label{fig:grid-map}
\end{figure}

The Yoccoz extension will be constructed so as to coincide with this homeomorphism on the grid $\Gamma$. Notice that the partition $\mathcal{Q}'_n$ still satisfies the first item of Theorem \ref{teo.aprioribounds} --- any two adjacent intervals are comparable. In fact, any two intervals of the same level are comparable, independent of being adjacent. This is simply because the intervals have either length $|q_{n-1}\theta - p_{n-1}|$ (a closest return of time $q_{n-1}$) or $|q_n\theta - p_n| + |q_{n-1}\theta - p_{n-1}|$ (the union of a closest return of time $q_n$ and another of time $q_{n-1}$), and we see that
\[ |q_{n-1}\theta - p_{n-1}| \leq |q_n\theta - p_n| + |q_{n-1}\theta - p_{n-1}| \leq 2|q_{n-1}\theta - p_{n-1}|. \]

We first observe that these \textit{a priori} bounds imply that the cells of the grids $\Gamma, \Gamma'$ have controlled geometry.
\begin{lema}\label{lema.geometric_control}

    Let $Q$ be a cell of the grid $\Gamma$ or of the grid $\Gamma'$. Then:
    \begin{itemize}
        \item the top, left and right sides of $Q$ are all comparable (with constant not depending on the level or the cell);
        \item the inner angle between the top and the left/right side of the polygon $Q$ is uniformly bounded away from $0$ and $\pi$;
        \item the inner angles between any two adjacent sides in the bottom of $Q$ are uniformly bounded away from $0$ and $2\pi$.
    \end{itemize}
    
\end{lema}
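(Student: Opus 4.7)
The plan is to reduce each of the three assertions to a comparison of lengths of intervals of $\mathcal{Q}_n$ at consecutive levels, and then to invoke the real \emph{a priori} bounds (Theorem \ref{teo.aprioribounds} and Corollary \ref{cor.aprioribounds}), which as noted transfer from $\mathcal{P}_n$ to $\mathcal{Q}_n$. The same argument will carry through for $\Gamma'$---in fact more easily, since all intervals at a given level of $\mathcal{Q}_n'$ are mutually comparable---so I focus on $\Gamma$.

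Given a cell $Q$ whose top is $z_n(t)z_n(s)$, with $t<s$ adjacent in $\mathcal{Q}_n$ and $t_l$ the $\mathcal{Q}_n$-neighbour of $t$ on the left, I would first observe that $y_n(t)=(s-t_l)/2$ together with adjacent comparability yields $y_n(t)\asymp y_n(s)\asymp s-t$, so the top side immediately has length $\asymp s-t$. For the left side, let $t_L',t_R'$ be the $\mathcal{Q}_{n+1}$-neighbours of $t$ and $t=t_0<t_1<\cdots<t_k=s$ be the $\mathcal{Q}_{n+1}$-subdivision of $[t,s]$, with $k\in\{a_{n+1},a_{n+1}+1\}$. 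The identity
\[ y_n(t)-y_{n+1}(t)\;=\;\tfrac12\bigl[(s-t_R')+(t_L'-t_l)\bigr] \]
will let me pass to subintervals at level $n+1$: the term $(s-t_R')$ is the union of the last $k-1$ subintervals of $[t,s]$ at level $n+1$, all of which are comparable to each other by adjacent comparability, so as soon as $k\geq 2$ one gets $(s-t_R')\asymp s-t$. The symmetric statement handles $(t_L'-t_l)$, and the left side is thereby $\asymp s-t$; the right side is identical.

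The only obstruction will be the degenerate case in which both $[t_l,t]$ and $[t,s]$ are Type~I and $a_{n+1}=1$: then no subdivision adjacent to $t$ occurs at level $n+1$ and $y_{n+1}(t)=y_n(t)$, so I would trace the vertical side down to the first level $n+m$ at which a subdivision appears and repeat the same estimate at that level, the uniform \emph{a priori} bounds keeping the comparability constants intact. With the sides now under control, the angle statements follow essentially for free: the top side has bounded slope $(y_n(s)-y_n(t))/(s-t)$, each bottom edge has horizontal and vertical components both comparable to a common level-$(n+1)$ subinterval length, and the left/right sides are vertical. Hence, at a top corner the angle lies uniformly in some $[\delta,\pi-\delta]$, and at each bottom vertex the interior angle lies uniformly in $[\delta,2\pi-\delta]$, which establishes the remaining two assertions.

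The main technical difficulty, as suggested above, is the vertical-side estimate in the degenerate Type~I/$a_{n+1}=1$ configuration, especially if it persists through several consecutive levels. Controlling this will require a careful bookkeeping of how many ``dummy'' levels one must skip before a real subdivision appears next to $t$, and showing that the uniform real \emph{a priori} bounds ensure this number is bounded and the resulting comparability constants do not deteriorate.
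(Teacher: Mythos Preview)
Your main argument coincides with the paper's: both reduce the left-side estimate to the identity
\[
y_n(t)-y_{n+1}(t)=\tfrac12\bigl[(|I|-|J|)+(|I_l|-|J_l|)\bigr]
\]
(your $(s-t_R')+(t_L'-t_l)$ is exactly this) and deduce $|I|-|J|\asymp|I|$ from adjacent comparability at level $n+1$; the top-side and angle arguments are likewise parallel. One small imprecision: for the bottom edges you say ``horizontal and vertical components both comparable'', but the vertical component can vanish. What you actually need, and what the comparability of adjacent $\mathcal{Q}_{n+1}$-intervals gives, is only that the slope is \emph{bounded above}; this already forces the interior angle at each bottom vertex into $[\pi-2\arctan M,\pi+2\arctan M]$.

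Where your proposal genuinely goes astray is the degenerate case $a_{n+1}=1$ with two adjacent Type~I intervals. The plan to ``trace the vertical side down to the first level $n+m$'' misreads the grid: by construction the left side of a level-$n$ cell is the single segment $[z_n(t),z_{n+1}(t)]$, and if this collapses the cell is simply a \emph{triangle}. There is nothing to trace across levels, and the paper handles exactly this situation in a remark after the proof, observing that such triangular cells still have uniformly bounded geometry (so the lemma's content survives with the obvious reinterpretation). Your further worry that the degeneration might persist through many levels, requiring a bound on runs of $a_k=1$, is also unfounded: when $a_{n+1}=1$, an un-subdivided Type~I interval at level $n$ has length $|I_{n-1}|=|I_n|+|I_{n+1}|$ and is therefore a Type~II interval at level $n+1$, hence is subdivided into $a_{n+2}+1\ge 2$ pieces at the next step. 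So even in your framing one has $m\le 1$ automatically, and the real \textit{a priori} bounds play no role in bounding it.
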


\begin{figure}[h]
    \centering
    \includegraphics[scale = 0.7]{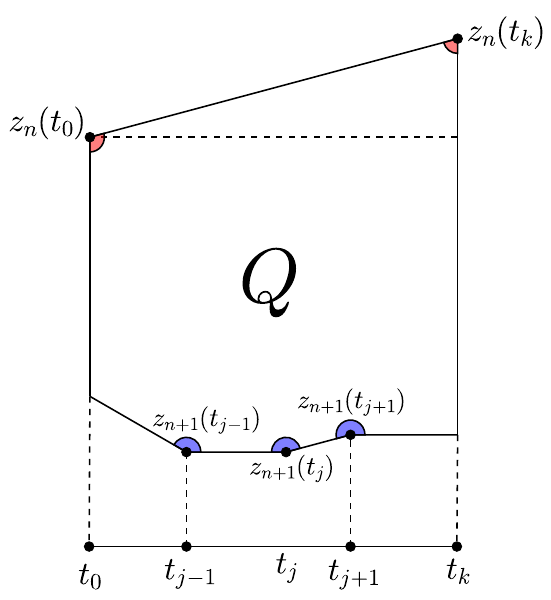}
    \caption{A possible cell $Q$. In red, the angles between the top side and the left and right sides. In blue, the angles between adjacent bottom sides.}
    \label{fig:cell-geometric-control}
\end{figure}
 
See Figure \ref{fig:cell-geometric-control}. We will work only on $\Gamma$, as the proof for $\Gamma'$ is completely analogous.

\begin{proof}
    Let $I = [t_0, t_k]$ be the interval determined by $\mathcal{Q}_n$ associated to $Q$, with $[t_0, t_k]\cap \mathcal{Q}_{n+1} = \{ t_0 < \dots < t_j < \dots < t_k \}$. We will begin by showing that the lengths of the left, right and top sides of $Q$ are all comparable to $|I|$. Let $I_l$ be the interval of $\mathcal{Q}_n$ to the left of $I$, $J$ be the leftmost subinterval of $I$ in $\mathcal{Q}_{n+1}$ and $J_l$ be the interval of $\mathcal{Q}_{n+1}$ to the left of $J$ --- i.e. the rightmost subinterval of $I_l$. Then the length of the left side of $Q$, which is the vertical segment connecting $z_n(t_0)$ to $z_{n+1}(t_0)$ (see Figure \ref{fig:cell-geometric-control}), is given by
    \[ |z_n(t_0) - z_{n+1}(t_0)| = y_n(t_0) - y_{n+1}(t_0) = \frac{|I| + |I_l|}{2} - \frac{|J| + |J_l|}{2} = \frac{1}{2}[ (|I| - |J|) + (|I_l| - |J_l|) ], \]
    where the second equality just follows from the definition of the $y_n$. To see that this is comparable to $|I|$, it is enough to show that $|I| - |J| \asymp |I|$ (the same argument shows that $|I_l| - |J_l| \asymp |I_l|$, and by Theorem \ref{teo.aprioribounds} we have $|I_l| \asymp |I|$). On one side, we trivially have $|I| - |J| \leq |I|$. On the other side, since any adjacent interval to $J$ of level $n + 1$ has length comparable to $|J|$, and as $J$ is a subinterval of $I$, we must have $|I| \geq (1 + 1/C)|J|$ for some constant $C \geq 1$ not depending on $I, J, Q$ or $n$. Easy manipulations now show that, $|I| - |J| \geq |I|/(C + 1)$.

    The claim for the top side comes from similar computations: if we denote $I_r$ the interval determined by $\mathcal{Q}_n$ to the right of $I$, we then see that
    \[ z_n(t_0) = t_0 + i\frac{|I| + |I_l|}{2} \ \text{ and } \ z_n(t_k) = t_k + i\frac{|I| + |I_r|}{2}, \]
    meaning that the length of the top side is given by
    \[ |z_n(t_0) - z_n(t_k)| = \left| t_k - t_0 + i\frac{|I_l| - |I_r|}{2} \right|. \]
    On one side, this is greater than or equal to $|t_k - t_0| = |I|$; on the other, we have
    \[ \left| t_k - t_0 + i\frac{|I_l| - |I_r|}{2} \right| \leq |t_k - t_0| + \frac{|I_l| + |I_r|}{2} \sim 2|I| \]
    again by Theorem \ref{teo.aprioribounds}.

    To see that any angle between the top side of $Q$ and either the right or left sides of $Q$ is bounded away from $0$ and $\pi$, it is enough to notice that $y_n(t_0) \asymp y_n(t_k) \asymp |I|$. Indeed, the polygon formed by the points $t_0, t_k, z_n(t_k), z_n(t_0)$ has all comparable sides (by the above arguments) and two right angles (at the base), and therefore cannot be ''arbitrarily far from being a square'' --- more formally, if the angle was not bounded, the top side would not be comparable to the left and right sides.

    Finally, to see that the angles between adjacent bottom sides of $Q$ are bounded away from $0$ and $2\pi$, we simply observe that, given $t_{j-1}, t_j, t_{j+1} \in I\cap\mathcal{Q}_{n+1}$ adjacent, the polygon obtained by taking as vertices $t_{j-1}, t_{j+1}, z_{n+1}(t_{j+1}), z_{n+1}(t_j), z_{n+1}(t_{j-1})$ has all comparable sides, with two vertical ones making right angles with the bottom, and the previous arguments conclude the proof.
\end{proof}

\begin{rmk}
    
Up to this point, we have overlooked a certain edge-case: when $y_n(t) = y_{n+1}(t)$ for some $t \in \mathcal{Q}_n$; this can only happen when $a_{n+1} = 1$. This would imply that a cell with $z_n(t)$ as a vertex has to be a triangle. We have not dealt with this case carefully, but the result still holds by completely analogous proofs. This implies that triangular cells cannot distort too much, meaning that any affine map taking a triangle of $\Gamma$ to a corresponding one of $\Gamma'$ has distortion bounded by a universal constant --- not depending on the actual cell we take. Indeed, by triangulating cells, we see that the same holds if we consider cells with up to a certain number of sides. This is useful for us, since some methods we will employ will usually assume that the cell has a large enough number of sides, or is in a deep enough level of the grid.

\end{rmk}

\subsection{Uniformization mappings, the dynamics of Möbius maps and the Yoccoz extension}

Let us now fix a cell $Q$ of $\Gamma$ and its image $Q'$ in $\Gamma'$; let us also fix their base intervals $I = [t_0, t_k]$ and $I' = [t'_0, t'_k]$. We are looking to extend the piece-wise affine map between $\partial Q$ and $\partial Q'$ to a map between $\overline{Q}$ and $\overline{Q'}$. The geometric control lemma we just proved allows us to simplify the problem by turning these polygons into the upper half-plane with good control on the distortion. Indeed, let $S := \{ x + iy \ | \ |x| \leq 1, 0 \leq y \leq 2 \}$ be a standard square, and define $\phi: Q \to S, \phi': Q' \to S$ as maps that are affine on the real coordinate and affine on each vertical fiber. To make things more explicit, we can consider the top sides of $Q$ and $Q'$ as graphs of affine functions $g_1$ and $g'_1$, while the bottom sides are graphs of piece-wise affine functions $g_2$ and $g'_2$. Then
\[ \phi(x + iy) = \left( -\frac{t_k - x}{t_k - t_0} + \frac{x - t_0}{t_k - t_0} \right) + 2i\left( \frac{y - g_2(x)}{g_1(x) - g_2(x)} \right) \]
and $\phi'$ is given by the same expression, replacing $t_0, t_k, g_1, g_2$ by $t'_0, t'_k, g'_1, g'_2$. By Lemma \ref{lema.geometric_control}, we get that $g_1, g_2, g'_1$ and $g'_2$ all have uniformly bounded derivatives, since their slopes are ratios between adjacent intervals of the partition. Therefore $\phi$ and $\phi'$ have uniformly bounded distortions --- i.e. their distortions do not depend on the actual cell $Q$.

\begin{figure}
    \centering
    \includegraphics[width=0.9\linewidth]{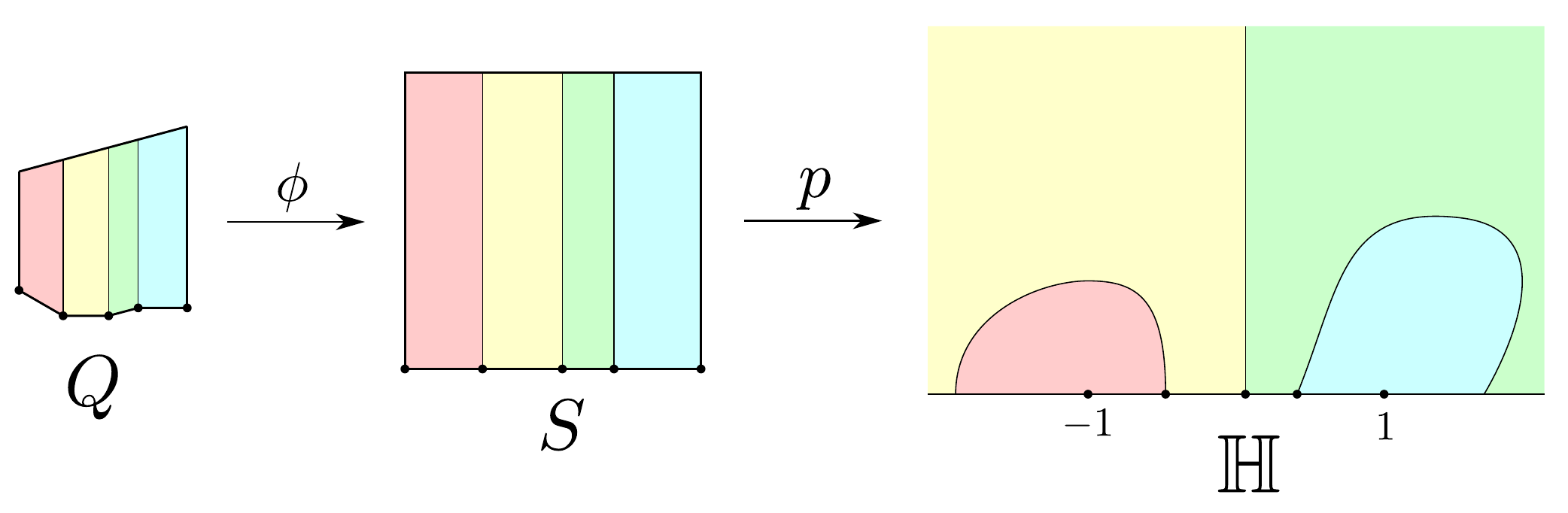}
    \caption{Uniformization of a cell $Q$ into the upper half-plane. Notice that the distribution of points of the partition on the base remains the same.}
    \label{fig:cell_uniformization}
\end{figure}

To get to the upper half-plane now, we will fix a quasiconformal map $p: S \to \h$ that coincides with identity on the interval $[-1, 1]$; this can be done by first taking a conformal uniformization $p_1: S \to \h$, then a quasisymmetric map on $\R$ that coincides with $\phi^{-1}$ on $[-1, 1]$, then a quasiconformal extension $p_2$ of that quasisymmetric map, and finally setting $p := p_2\circ p_1$. As long as we use the same map $p$ for all cells, the final result of this construction has turned the previous problem of extending the piece-wise affine map from $\partial Q$ to $\partial Q'$ into extending the piece-wise affine homeomorphism of the real line that is identity outside of $[-1, 1]$ and sends the points $s_j := \phi(z_{n+1}(t_j))$ to the points $s'_j := \phi'(z'_{n+1}(t'_j))$. To solve this problem, we will need to understand the distribution of these points. Since the points $t_j'$ are obtained by rotation, their distribution is uniform and then we get the following control:

\begin{lema}\label{lema.uniform_bound_rotation}

    Let $I' = [t'_0, t'_k]$ be an interval of the partition $\mathcal{Q}'_n$ and write $I'\cap \mathcal{Q}'_{n+1} = \{ t'_0 < t'_1 < \dots < t'_k \}$. Then
    \[ t'_j - t'_{j-1} \asymp \frac{t'_k - t'_0}{k}, \ j = 1, \dots, k. \]
    In particular, if $s'_j := \phi'(z'_{n+1}(t'_j))$, we have
    \[ s'_j - s'_{j-1} \asymp \frac{1}{k}, \ j = 1,\dots, k. \]

\end{lema}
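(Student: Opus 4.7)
The strategy is to exploit the translation invariance of $T_\theta$. Since every closest return of time $q_m$ for the rotation has the same length $|q_m\theta - p_m|$, each interval of $\mathcal{Q}'_n$ has one of only two possible lengths---$L_A := |q_{n-1}\theta - p_{n-1}|$ (Type A, a closest return of time $q_{n-1}$) or $L_B := |q_n\theta - p_n| + |q_{n-1}\theta - p_{n-1}|$ (Type B)---and each of its subintervals in $\mathcal{Q}'_{n+1}$ is again either Type A or Type B at level $n+1$, hence of length $|q_n\theta - p_n|$ or $|q_n\theta - p_n| + |q_{n+1}\theta - p_{n+1}|$.

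To count the subintervals, I would combine the continued-fraction identity $|q_{n-1}\theta - p_{n-1}| = a_{n+1}|q_n\theta - p_n| + |q_{n+1}\theta - p_{n+1}|$ with the combinatorial description of the refinement recalled just before Figure \ref{fig:dynamical-partition}. A Type A interval at level $n$ splits into $k = a_{n+1}$ subintervals, of which $a_{n+1}-1$ are Type A at level $n+1$ and one is Type B; a Type B interval at level $n$ splits into $k = a_{n+1}+1$ subintervals, of which $a_{n+1}$ are Type A at level $n+1$ and one is Type B. Since $|q_{n+1}\theta - p_{n+1}| \leq |q_n\theta - p_n|$, the two admissible subinterval lengths differ by at most a factor of $2$, so the $k$ subintervals of $I'$ have pairwise comparable lengths with absolute constants. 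As they sum to $t'_k - t'_0$, each length is $\asymp (t'_k - t'_0)/k$, which is the first claim.

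For the second claim, I will use the explicit formula for $\phi'$ stated immediately before the lemma. The vertex $z'_{n+1}(t'_j) = t'_j + iy'_{n+1}(t'_j)$ lies on the bottom of the cell $Q'$---the piecewise-linear path through the points $z'_{n+1}(t'_0), z'_{n+1}(t'_1), \ldots, z'_{n+1}(t'_k)$---and hence on the graph of $g'_2$. Therefore the imaginary part of $\phi'(z'_{n+1}(t'_j))$ vanishes and its real part equals $(2t'_j - t'_k - t'_0)/(t'_k - t'_0)$, giving
\[ s'_j - s'_{j-1} = \frac{2(t'_j - t'_{j-1})}{t'_k - t'_0} \asymp \frac{1}{k} \]
by the first claim.

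The only subtlety is parsing the combinatorics of the refinement correctly using the continued-fraction recurrence; once that is done, translation invariance removes any distortion to track and absolute constants suffice throughout. This is precisely the step that would fail for the nonlinear map $f$, which is why the lemma is stated only for the rotation.
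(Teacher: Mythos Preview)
Your argument is correct and is essentially the approach the paper has in mind: the lemma is stated there without proof, the only justification being the sentence ``Since the points $t_j'$ are obtained by rotation, their distribution is uniform'' together with the earlier observation (just before Lemma~\ref{lema.geometric_control}) that intervals of $\mathcal{Q}'_n$ come in exactly two lengths differing by at most a factor of~$2$. You have simply written out these details explicitly, including the combinatorics of the refinement and the direct computation of $s'_j$ from the formula for $\phi'$.
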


The distribution of the points $t_j$ is a much more involved problem. When we subpartition the interval $I$, we get intervals that are the images of their adjacent under the map $f^{q_n}$. Since $f$ is analytic, it has negative Schwarzian derivative, which implies that $f^{q_n}|_{I}$ is an \textit{almost-parabolic map}. The following estimates were obtained by Yoccoz for this class of maps (see Appendix B in \cite{deFaria-deMelo1999} or Lemma 7.3 in \cite{deFaria-Guarino} for a precise definition of almost-parabolicity and the proof of the estimates):

\begin{lema}[Yoccoz's Almost-Parabolic Bound]\label{teo.yoccoz_almost-parabolic_bound}
    
    Let $I = [t_0, t_k]$ be an interval of the partition $\mathcal{Q}_n$ and write $I\cap \mathcal{Q}_{n+1} = \{ t_0 < t_1 < \dots < t_k \}$. Then
    \[ t_j - t_{j-1} \asymp \frac{t_k - t_0}{\min\{j, k + 1 - j\}^2}, \ j = 1, \dots, k. \]
    In particular, if $s_j := \phi(z_{n+1}(t_j))$, we have
    \[ s_j - s_{j-1} \asymp \frac{1}{\min\{j, k + 1 - j\}^2}, \ j = 1, \dots, k. \]
    
\end{lema}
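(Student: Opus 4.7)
The plan is to realize the statement as an application of Yoccoz's estimates for iterates of almost-parabolic maps. The key observation is that, on the interval $I = [t_0, t_k]$, the map $g := f^{q_n}$ is an almost-parabolic diffeomorphism in Yoccoz's sense: inspecting the two types of intervals described in Section 2.1, the partition points $t_0, t_1, \dots, t_k$ are precisely a $g$-orbit, starting near $t_0$ (which is very close to being a fixed point of $g$) and terminating near $t_k$ (similarly close to fixed). Since $f$ is real analytic with a single critical orbit, it has negative Schwarzian derivative, a property inherited by $g$; combined with the real \textit{a priori} bounds of Theorem \ref{teo.aprioribounds}, this forces $g$, on a definite neighborhood of $I$, to be a small perturbation of the identity whose first nontrivial Taylor term is quadratic.

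Next I would carry out the standard model computation. For the parabolic map $z \mapsto z + z^2$, a one-line induction gives $1/y_j - 1/y_0 \asymp -j$ for an escaping orbit, so $y_j \asymp -1/j$ and the successive increments satisfy $y_j - y_{j-1} \asymp 1/j^2$. The Koebe-type distortion control furnished by the negative Schwarzian derivative, together with the real \textit{a priori} bounds, allows this model estimate to be transferred to $g$ with constants depending only on those bounds: the increment $t_j - t_{j-1}$ is comparable to $(t_k - t_0)/j^2$, uniformly in $n$ and in the choice of cell, for $j$ not too large compared to $k$.

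The symmetric denominator $\min\{j, k+1-j\}^2$ then appears because the same argument can be run on the inverse map $g^{-1}$ starting from $t_k$, which is also an approximate fixed point; this yields the bound $(t_k-t_0)/(k+1-j)^2$ valid for $j$ close to $k$. Matching the two bounds in the middle and observing that for each $j$ one of them is sharp gives the stated estimate. The real technical work—verifying uniform almost-parabolicity, quantifying the distortion estimates with comparison constants that depend only on the \textit{a priori} bounds, and showing that the quadratic model suffices—is the main obstacle, and it is precisely what is carried out in Appendix B of \cite{deFaria-deMelo1999} and Lemma 7.3 of \cite{deFaria-Guarino}, which I would cite rather than reprove.

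Finally, the statement for the $s_j$ follows immediately from the first part. From the explicit formula for $\phi$, its real coordinate is $\operatorname{Re} \phi(x + iy) = (2x - t_0 - t_k)/(t_k - t_0)$, which maps $[t_0, t_k]$ affinely onto $[-1, 1]$. Since each vertex $z_{n+1}(t_j)$ lies on the graph of the bottom function $g_2$, its image $s_j = \phi(z_{n+1}(t_j))$ is real, and hence $s_j - s_{j-1} = 2(t_j - t_{j-1})/(t_k - t_0)$. Substituting the first asymptotic and using $s_k - s_0 = 2$ yields $s_j - s_{j-1} \asymp 1/\min\{j, k+1-j\}^2$, completing the proof.
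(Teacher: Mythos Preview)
Your proposal is correct and matches the paper's treatment: the paper does not give its own proof of this lemma but simply defers to Appendix~B of \cite{deFaria-deMelo1999} and Lemma~7.3 of \cite{deFaria-Guarino}, exactly as you do after your sketch. Your explicit derivation of the $s_j$ estimate from the $t_j$ estimate via the real-coordinate formula for $\phi$ is a welcome addition that the paper leaves implicit.
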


The main trick now is to notice that the map we want to extend sends a distribution concentrated in the center of the interval to one that is uniform; or, equivalently, the inverse of that map sends a uniform distribution to one that accumulates in the center. Dynamically, this picture is similar to one where we have an attracting fixed point in the interior of the interval and repelling fixed points at both ends. We will use this observation to change the problem of extending the piece-wise affine map, which has $k$ affine pieces, to that of extending a piece-wise M\"{o}bius map, which will have only two pieces to be considered. The M\"{o}bius maps in particular will be modeled by the family
\[ \zeta_a(z) := \frac{z}{a - (a-1)z}, \ a \geq 2. \]
We can see that $\zeta_a$ fixes $0$ and $1$, with multipliers $1/a$ and $a$ at each of them, respectively. Furthermore, $\zeta_a$ leaves the interval $[0, 1]$ invariant.

\begin{prop}\label{prop.mobius_family_properties}

    The derivative of $\zeta_a$ on $[0, 1]$ is increasing. For $x, x+\varepsilon \in [0, 1], \varepsilon > 0$, one has
    \begin{equation}
        1 < \frac{\zeta_a'(x + \varepsilon)}{\zeta_a'(x)} \leq (1 +\varepsilon a)^2,
    \end{equation}
    \begin{equation}
        \frac{\varepsilon^3a}{(1 + \varepsilon a)^2}\frac{1}{(1 - x)^2} \leq \zeta_a(x + \varepsilon) - \zeta_a(x) \leq \frac{\varepsilon(1 + \varepsilon a)^2}{a}\frac{1}{(1 - x)^2}.
    \end{equation}

\end{prop}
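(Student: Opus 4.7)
The plan is to make everything explicit from the direct formula for $\zeta_a'$, and then reduce each inequality to a clean algebraic check. A convenient reparametrization will be $u := 1 - x$, which turns the pole at $z = 1/(1 - 1/a)$ into a controllable quantity.

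First, I would compute the derivative by the quotient rule, obtaining
\[ \zeta_a'(z) \;=\; \frac{a}{\bigl(a - (a-1)z\bigr)^2} \;=\; \frac{a}{\bigl(1 + (a-1)(1-z)\bigr)^2}. \]
This makes the monotonicity of $\zeta_a'$ on $[0,1]$ immediate: as $z$ grows from $0$ to $1$ the denominator $a - (a-1)z$ decreases from $a$ to $1$, so $\zeta_a'$ increases from $1/a$ to $a$. In particular $\zeta_a'(x+\varepsilon)/\zeta_a'(x) > 1$.

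Next, for the ratio bound I would write
\[ \frac{\zeta_a'(x+\varepsilon)}{\zeta_a'(x)} \;=\; \left(\frac{a - (a-1)x}{a - (a-1)(x+\varepsilon)}\right)^{\!2} \;=\; \left(1 + \frac{(a-1)\varepsilon}{a - (a-1)(x+\varepsilon)}\right)^{\!2}. \]
Since $x + \varepsilon \le 1$ the denominator is at least $1$, so $(a-1)\varepsilon/\bigl(a-(a-1)(x+\varepsilon)\bigr) \le (a-1)\varepsilon \le a\varepsilon$, giving the bound $(1+\varepsilon a)^2$.

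For the increment inequalities, I would use monotonicity of $\zeta_a'$ together with the fundamental theorem of calculus to sandwich
\[ \varepsilon\,\zeta_a'(x) \;\le\; \zeta_a(x+\varepsilon) - \zeta_a(x) \;\le\; \varepsilon\,\zeta_a'(x+\varepsilon). \]
Setting $u := 1 - x$ and $v := u - \varepsilon \ge 0$, both endpoints become
\[ \varepsilon\,\zeta_a'(x) = \frac{\varepsilon a}{\bigl(1 + (a-1)u\bigr)^2}, \qquad \varepsilon\,\zeta_a'(x+\varepsilon) = \frac{\varepsilon a}{\bigl(1 + (a-1)v\bigr)^2}. \]
The lower target $\varepsilon^3 a/((1+\varepsilon a)^2(1-x)^2)$ then reduces, after clearing denominators and taking square roots, to the inequality
\[ (1+\varepsilon a)\,u \;\ge\; \varepsilon\bigl(1 + (a-1)u\bigr), \]
which simplifies to $u(1+\varepsilon) \ge \varepsilon$ and holds because $u \ge \varepsilon$. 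Similarly, the upper target $\varepsilon(1+\varepsilon a)^2/(a(1-x)^2)$ reduces to
\[ a u \;\le\; (1+\varepsilon a)\bigl(1 + (a-1)v\bigr), \]
and expanding and using $u = v + \varepsilon$ collapses this to $v \le 1 + \varepsilon a(a-1)v$, which is trivial since $v \le 1$.

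The only real obstacle is bookkeeping: getting the asymmetric powers of $\varepsilon$ and $a$ exactly right in the bounds, which is why the substitution $u = 1 - x$ is essential — it exposes $(1-x)^2 = u^2$ explicitly in the denominators and makes the resulting inequalities linear in $u$ and $v$.
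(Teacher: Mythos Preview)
Your proof is correct and follows essentially the same route as the paper's: compute $\zeta_a'(x)=a/(a-(a-1)x)^2$, note monotonicity, sandwich the increment via the mean value theorem, and reduce each target inequality to a one-line algebraic check. Your substitution $u=1-x$, $v=1-x-\varepsilon$ streamlines the bookkeeping, but the resulting inequalities are literally the same ones the paper reaches (your $u(1+\varepsilon)\ge\varepsilon$ is their $(1+\varepsilon)x\le 1$, and your $v\le 1+\varepsilon a(a-1)v$ is their $0\le x+\varepsilon+\varepsilon a(a-1)(1-(x+\varepsilon))$).
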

\begin{proof}

    It is not hard to verify that $\zeta_a'(x) = a/(a - (a - 1)x)^2$ is increasing, giving us the lower bound in (2.1). The upper bound of (2.1) is equivalent to
    \[ \frac{a - (a - 1)x}{a - (a - 1)(x + \varepsilon)} \leq 1 + \varepsilon a \iff (a - 1)\varepsilon + a(a - 1)\varepsilon(x + \varepsilon) \leq \varepsilon a^2 \]
    which is easily seen to be true since $x + \varepsilon \leq 1$. For the second set of estimates, we apply the mean value theorem to get
    \[ \zeta_a'(x) \leq \frac{\zeta_a(x + \varepsilon) - \zeta_a(x)}{\varepsilon} \leq \zeta_a'(x + \varepsilon). \]
    Now it is easy to see that
    \begin{align*}
        \zeta_a'(x + \varepsilon) = \frac{a}{(a - (a - 1)(x + \varepsilon))^2} & \leq \frac{(1 + \varepsilon a)^2}{a(1 - x)^2} \iff \\
        \frac{a}{a - (a - 1)(x + \varepsilon)} & \leq \frac{1 + \varepsilon a}{1 - x} \iff \\
        0 & \leq - \varepsilon a + x + \varepsilon + \varepsilon a(a(1 - x) - \varepsilon a + x + \varepsilon) \\
         & = x + \varepsilon + \varepsilon a(a - 1)(1 - (x + \varepsilon)),
    \end{align*}
    and
    \begin{align*}
        \zeta_a'(x) = \frac{a}{(a - (a - 1)x)^2} & \geq \frac{\varepsilon^2 a}{(1 + \varepsilon a)^2(1 - x)^2} \iff \\
        \frac{1}{a - (a - 1)x} & \geq \frac{\varepsilon}{(1 + \varepsilon a)(1 - x)} \iff \\
        1 - (1 + \varepsilon)x & \geq 0.
    \end{align*}
    The first inequality is then seen to be true, while the second one will follow from the observation that, since $x, x + \varepsilon \leq 1$, we get $(1 + \varepsilon)x \leq (2 - x)x \leq 1$.
    
\end{proof}

\begin{rmk}

    From these estimates, we can see that, whenever $\varepsilon \asymp 1/a$, we get
    \[ \zeta_a'(x + \varepsilon) \asymp \zeta_a'(x), \]
    \[  \zeta_a(x + \varepsilon) - \zeta_a(x) \asymp \frac{1}{a^2(1 - x)^2}, \]
    in the sense that the constants involved in both estimates depend only on the ratio between $\varepsilon$ and $1/a$. More explicitly, if $C \geq 1$ satisfies $1/C \leq a\varepsilon \leq C$, then
    \[ 1 < \frac{\zeta_a'(x + \varepsilon)}{\zeta_a'(z)} \leq (1 + C)^2, \]
    \[ \frac{1}{C^5} \leq \frac{\zeta_a(x + \varepsilon) - \zeta_a(x)}{1/a^2(1 - x)^2} \leq C(1 + C^2). \]
    
\end{rmk}

\vspace{0.9cm}

Now, let us decompose the integer $k = a + b$, with $a = \lfloor k/2 \rfloor$ and $b = \lceil k/2 \rceil$, and consider the piecewise M\"{o}bius map $\psi: \R \to \R$ defined by
\[ \psi(x) := \begin{cases} s_a' - (1 + s_a')\zeta_a\left( \frac{-x + s_a'}{1 + s_a'} \right) & \text{if } -1 \leq x \leq s_a'; \\ s_a' + (1 - s_a')\zeta_b\left( \frac{x - s_a'}{1 - s_a'} \right) & \text{if } s_a' \leq x \leq 1; \\ x & \text{otherwise.} \end{cases} \]
Notice that on $[-1, s_a']$ the map $\psi$ is conjugate with $\zeta_a$, and on $[s_a', 1]$ with   $\zeta_b$. This is done in such a way that the fixed points of $\zeta_a$ and $\zeta_b$ are taken to the endpoints $1, s_a', -1$ of the intervals. More specifically, $-1$ and $1$ are the repelling fixed points of multipliers $a$ and $b$, respectively, and $s_a'$ is the attracting fixed point of multipliers $1/a$ to the left and $1/b$ to the right. The map $\psi$ admits an explicit extension, which allows us to easily compute its distortion.

\begin{lema}\label{lema.strebel}

    The map $\psi: \R \to \R$, as defined above for a cell $Q$ of level $n$ of the grid $\Gamma$, can be extended to a $K_n$-quasiconformal map of $\h$, with
    \[ K_n = \frac{4}{\pi^2}\log^2a_{n+1} + \mathcal{O}(\log a_{n+1}) \ \text{ as } a_{n+1} \xrightarrow{} \infty. \]

\end{lema}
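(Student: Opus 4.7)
The plan is to construct an explicit quasiconformal extension $\Psi : \h \to \h$ of the boundary map $\psi$ and compute its dilatation directly, following the strategy of Strebel that Petersen--Zakeri adapted. First, I would observe that each Möbius piece admits a canonical conformal extension to $\h$: the formula for $\psi$ on $[-1, s_a']$ is $L^{-1} \circ \zeta_a \circ L$ for the affine map $L(x) = (s_a' - x)/(1 + s_a')$, so this composition is a global Möbius transformation of $\C$ that preserves $\h$; similarly on $[s_a', 1]$, $\psi$ extends as a Möbius automorphism of $\h$. This furnishes three natural candidate extensions on $\h$ (identity above $(-\infty,-1]\cup[1,\infty)$, and one Möbius piece above each of $[-1,s_a']$ and $[s_a',1]$), each matching $\psi$ on its segment of $\R$, but which fail to agree inside $\h$.

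The work lies in gluing these candidates across the three vertical rays above the break points $-1$, $s_a'$, $1$. The correct coordinates for the interpolation are the logarithmic ones that linearize each Möbius piece: the map $\zeta_a$, when conjugated by a Möbius transformation carrying its repelling fixed point to $0$ and its attracting fixed point to $\infty$, becomes the pure scaling $z \mapsto z/a$, whose logarithm is the translation $w \mapsto w - \log a$. In these coordinates, $\h$ becomes a horizontal strip of height $\pi$, and the interpolation problem reduces to connecting a translation by $\log a$ on one end of a rectangle to a translation by $0$ on the other, across a region of bounded conformal modulus. The extremal such interpolation is an affine twist whose Beltrami coefficient can be computed by hand; its dilatation is $1 + \tfrac{4}{\pi^2}\log^2 a + \mathcal{O}(\log a)$, where the factor $4/\pi^2$ comes from the (inverse of the) modulus of the standard rectangle used to glue.

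Third, since $a = \lfloor k/2 \rfloor$ and $b = \lceil k/2 \rceil$ with $k \in \{a_{n+1}, a_{n+1}+1\}$, we have $\log a = \log a_{n+1} + \mathcal{O}(1)$ and likewise for $\log b$, so both multipliers contribute the same leading term and the differences can be absorbed in the $\mathcal{O}(\log a_{n+1})$ error. Outside the transition strips, the map is Möbius or identity, hence conformal, so the essential supremum of the dilatation is attained precisely inside the transition regions, producing the claimed bound.

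The main technical obstacle is the change-of-coordinates bookkeeping and ensuring that the three local interpolations at $-1$, $s_a'$, $1$ can be carried out simultaneously, each across a domain of bounded modulus, without their dilatations compounding. The sharp leading coefficient $4/\pi^2$ is delicate: it forces the transition rectangles to be chosen with a prescribed aspect ratio (so that no conformal modulus is wasted), and one must verify that the horizontal edges of these rectangles, where interpolation meets identity or another Möbius piece, are actually tangent in the appropriate sense so that the overall map is continuous and of bounded dilatation equal to the twist's dilatation.
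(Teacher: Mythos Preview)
Your high-level instinct is right: pass to logarithmic coordinates so that each M\"obius piece becomes a translation, then interpolate by an affine shear and read off the dilatation. But the geometry you propose does not work as stated, and the gap is exactly the one that produces the constant $4/\pi^2$.

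You frame the construction as three local interpolations ``across the vertical rays'' above $-1$, $s_a'$, $1$, with the three conformal candidates (identity, $\tilde\zeta_a$, $\tilde\zeta_b$) living on the vertical strips in between. This cannot be made into a self-map of $\h$ without further choices: the M\"obius map $\tilde\zeta_a$ does \emph{not} preserve the vertical line $\{\Re z = -1\}$, so ``identity to the left, $\tilde\zeta_a$ to the right'' is not even continuous along that ray, and any transition region you insert there must stay off the real axis (else you alter the boundary values of $\psi$). You acknowledge this as a ``technical obstacle'' but do not resolve it, and in particular you never specify the transition regions. Relatedly, when you say the log-coordinate strip has height~$\pi$ and the interpolation goes from one \emph{end} of a rectangle to the other, you are describing a horizontal stretch, not a vertical shear; that map has dilatation of order $\log a$, not $\log^2 a$, and no choice of rectangle width recovers~$4/\pi^2$.

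The paper's construction fixes all of this with one observation you are missing: the hyperbolic geodesic $\gamma$ in $\h$ joining the two fixed points $-1$ and $s_a'$ is \emph{invariant} under $\tilde\zeta_a$. Hence the half-disk $D_a$ bounded by $[-1,s_a']$ and $\gamma$ is $\tilde\zeta_a$-invariant, and one can build the extension so that it equals the identity everywhere outside $D_a\cup D_b$ and interpolates, inside each half-disk, between $\tilde\zeta_a$ (resp.\ $\tilde\zeta_b$) on the interval and the identity on the geodesic. After the M\"obius sending the fixed points to $0,\infty$ and then $\log$, the half-disk becomes a strip of height $\pi/2$ (not $\pi$), the bottom edge carries translation by $\log a$, the top edge carries the identity, and the affine shear
\[
\Psi_a(x+iy)=x+\Bigl(1-\tfrac{2y}{\pi}\Bigr)\log a+iy
\]
does the job with constant dilatation $K_{\Psi_a}=\dfrac{4}{\pi^2}\log^2 a+\mathcal{O}(1)$. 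There are only \emph{two} interpolation regions, they are disjoint, and outside them the map is literally the identity; so there is no compounding and no tangency issue to check. The sharp constant $4/\pi^2=(2/\pi)^2$ is forced by the height $\pi/2$, which in turn is forced by the fact that the geodesic meets $\R$ at right angles---it is not a free parameter coming from a choice of rectangle.
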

\begin{proof}

    First, notice that the action of $\psi$ on the interval $[-1, s'_a]$ is the restriction of a M\"{o}bius transformation with real coefficients; let us denote this transformation by $\tilde{\zeta}_a$. This means that $\tilde{\zeta}_a$ is an automorphism of the upper half-plane, and as such must preserve geodesics. If we let $\gamma$ be the geodesic on $\h$ connecting $-1$ to $s'_a$, then, because $\tilde{\zeta}_a$ fixes its endpoints, we must have that it leaves $\gamma$ invariant as well. Notice that $\gamma$ touches the real line at angles of $\pi/2$. Now, consider $L$ a M\"{o}bius transformation mapping $-1$ to $0$ and $s'_a$ to $\infty$, so that $L$ conjugates $\tilde{\zeta}_a$ with another M\"{o}bius action that fixes $0$ and $\infty$, i.e. just multiplication by some scalar. Since $L$ must preserve the multiplier of $-1$, we have that $L\circ \tilde{\zeta}_a\circ L^{-1}(z) = az$. We can choose $L$ such that the interval $[-1, s'_a]$ is sent to the non-negative real line, so that $L(\gamma)$ will be an arc of a circle (in the sphere), connecting $0$ and $\infty$, invariant under multiplication by $a$, and crossing the real line in an angle of $\pi/2$. Since $L$ preserves orientation, $L(\gamma)$ is the non-negative imaginary line. Further conjugating with the exponential map shows us that the action of $\tilde{\zeta}_a$ on the half-disk $D_a$ bounded by $\gamma$ and the interval $[-1, s'_a]$ is conformally conjugate to that of translation by $\log a$ on the strip $\{ x + iy \ | \ 0 \leq y \leq \pi/2 \}$.

    \begin{figure}
        \centering
        \includegraphics[scale = 0.4]{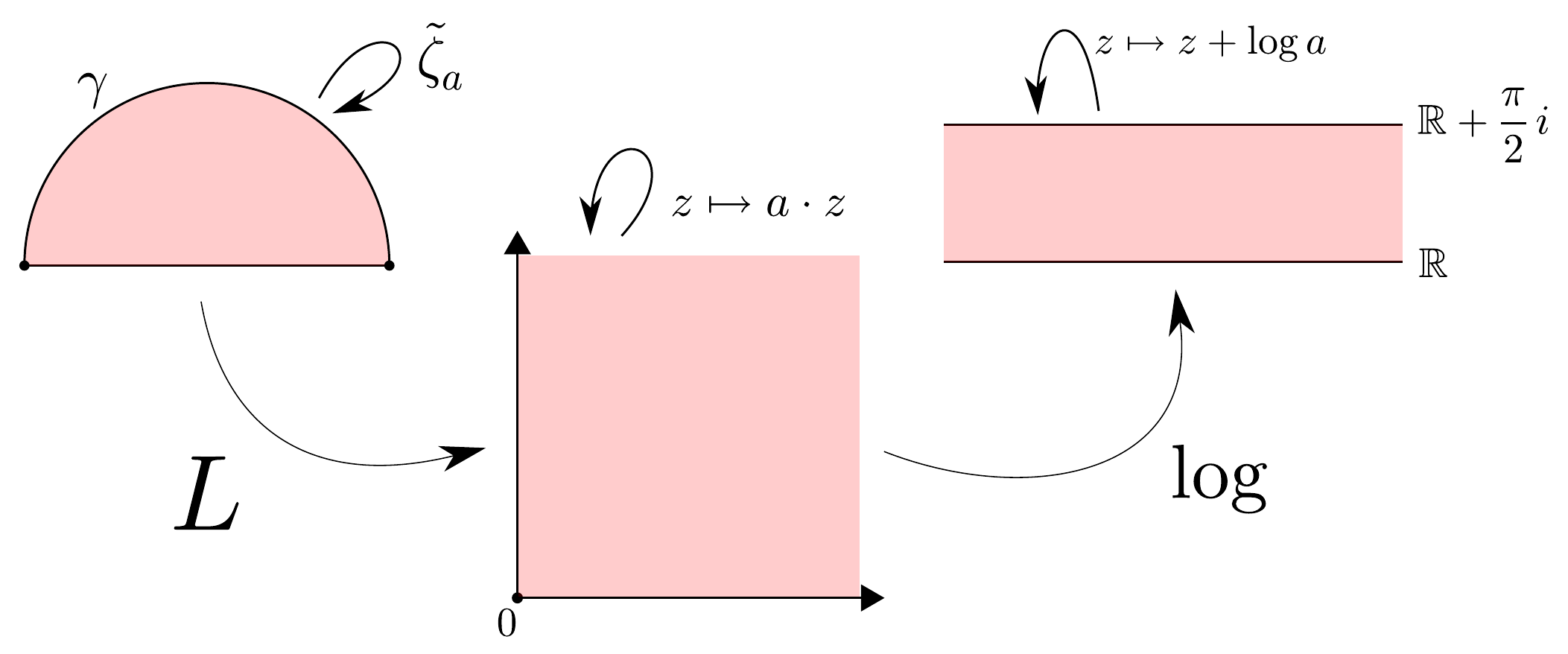}
        \caption{The uniformization of the half-disk between the interval $[-1, s'_a]$ and the geodesic $\gamma$ into an infinite strip of height $\pi/2$. The map $\tilde{\zeta}_a$ is then conjugate to translation by $\log a$.}
        \label{fig:mobius-identity-interpolation}
    \end{figure}

    The problem thus reduces to finding a map on this strip that coincides with translation by $\log a$ on the real line, and the identity on the line $\R + \pi i/2$ --- since we can then use the above uniformization from the half-disk to the strip to conjugate to a map that coincides with $\tilde{\zeta}_a$ on $[-1, s'_a]$ and identity on $\gamma$. This map can be given explicitly:
    \[ \Psi_a(x + iy) = x + iy + \frac{\pi}{2}y\log a. \]
    This is an $\R-linear$ map with constant distortion
    \[ K_{\Psi_a} = \frac{ \sqrt{1 + \frac{1}{\pi^2}\log^2 a} + \frac{1}{\pi}\log a }{ \sqrt{1 + \frac{1}{\pi^2}\log^2 a} - \frac{1}{\pi}\log a } = \frac{4}{\pi^2}\log^2 a + \mathcal{O}(1). \]
    The exact same arguments can be repeated for the interval $[s'_a, 1]$ to obtain an extension $\Psi_b$ with distortion $4\log^2 b/\pi^2 + \mathcal{O}(1)$. Thus, the extension of $\psi$ by taking the conformal conjugates of $\Psi_a$ and $\Psi_b$ on the half-disks $D_a$ and $D_b$ --- the second being that bounded by $[s'_a, 1]$ and the geodesic on $\h$ connecting $s'_a$ to $1$ --- and identity outside will have distortion bounded by some constant (since $b \geq a$)
    \[ K_n = \frac{4}{\pi^2}\log^2 b + \mathcal{O}(1) = \frac{4}{\pi^2}\log^2 a_{n+1} + \mathcal{O}(\log a_{n+1}). \]

\end{proof}

The following corollary to the above lemma will be useful us.

\begin{cor}\label{cor.optimal_distortion_interpolation}

    There exists a universal constant $C \geq 1$, nor depending on the cell $Q$ nor the level $n$, for which $\psi$ is not $((\log^2 a_{n+1}/C) - \varepsilon)$-quasiconformal, for all $\varepsilon > 0$.
    
\end{cor}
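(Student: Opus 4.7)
The plan is to reduce the problem to Strebel's extremal quasiconformal problem on an infinite strip, via a conformal uniformization of $\overline{\h}$.

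First, I would apply the conformal map $\Phi_1(z) = \log((z+1)/(1-z))$, which sends $\overline{\h}$ onto the closed strip $\overline{\Sigma} = \{0 \le \mathrm{Im}\, w \le \pi\}$, taking $\mp 1 \mapsto \mp\infty$, the interval $[-1, 1]$ to the bottom edge $\R$, and $\R \setminus [-1, 1]$ to the top edge $\R + i\pi$. Since $\Phi_1$ is conformal, any quasiconformal extension $\Phi$ of $\psi$ to $\h$ conjugates to a quasiconformal self-map of $\Sigma$ of the same dilatation. The induced boundary values are the identity on the top edge (as $\psi$ is the identity on $\R \setminus [-1, 1]$), while on the bottom they are asymptotic to translation by $\log a$ near $-\infty$ and by $-\log b$ near $+\infty$; this follows from the local expansions $\Phi_1(z) \sim \log(z+1) - \log 2$ as $z \to -1$ and $\Phi_1(z) \sim -\log(1-z) + \log 2$ as $z \to 1$, combined with the fact that $\tilde\zeta_a$ and $\tilde\zeta_b$ have multipliers $a$ and $b$ at their respective repelling fixed points $-1$ and $1$.

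Second, I would invoke a Strebel-type extremal bound: any quasiconformal self-map of $\Sigma$ whose boundary values are the identity on the top edge and asymptotic to translations by $T_\pm$ at $\pm\infty$ on the bottom edge must have dilatation
\[ K \ge \left(\frac{T_- - T_+}{\pi}\right)^2 + \mathcal{O}(1), \]
with the extremal map given by the affine shear $w \mapsto w + (T_- - T_+)(1 - \mathrm{Im}\,w/\pi)$ (modulo an overall translation). Applied with $T_- = \log a$ and $T_+ = -\log b$, this gives $K \ge ((\log a + \log b)/\pi)^2 + \mathcal{O}(1)$.

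Third, I would convert this to an arithmetic bound. Since $k = a + b \in \{a_{n+1}, a_{n+1} + 1\}$ and $b = \lceil k/2 \rceil$, both $a$ and $b$ are comparable to $a_{n+1}/2$, so $\log a + \log b \ge 2\log(a_{n+1}/2)$. Absorbing subleading terms and handling small values of $a_{n+1}$ trivially (using $K \ge 1$), I would obtain a universal constant $C \ge 1$, independent of the cell $Q$ and the level $n$, for which every quasiconformal extension of $\psi$ satisfies $K \ge \log^2 a_{n+1}/C$. Consequently, for every $\varepsilon > 0$, $K > \log^2 a_{n+1}/C - \varepsilon$, so $\psi$ is not $(\log^2 a_{n+1}/C - \varepsilon)$-quasiconformal.

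The hard part will be the rigorous proof of the Strebel-type extremal bound in the second step, because the bottom boundary data is only asymptotically (not uniformly) translating at the two ends of the strip. This is to be carried out by analyzing the conformal modulus of long rectangular quadrilaterals of the form $[-A, A] \times [0, \pi] \subset \Sigma$ as $A \to \infty$, tracking how the accumulated horizontal twist of the boundary values forces a quadratic lower bound on the dilatation through the modulus inequality for quasiconformal maps.
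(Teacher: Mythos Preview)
You have misread what the corollary is asserting. In the paper, after Lemma~\ref{lema.strebel} the symbol $\psi$ no longer denotes merely the piecewise M\"obius boundary homeomorphism of $\R$; it denotes the \emph{specific} quasiconformal extension to $\h$ built there (affine shear on each half-strip, pulled back through the conformal uniformizations of $D_a$ and $D_b$). The corollary is just saying that \emph{this particular map} has maximal dilatation at least $\log^2 a_{n+1}/C$. The paper's proof is a one-line observation: on the half-disk $D_b$ the extension is conformally conjugate to the linear shear $\Psi_b$, whose dilatation is the constant $K_{\Psi_b}\asymp \log^2 a_{n+1}$; hence $\esssup K_\psi \ge K_{\Psi_b}$ and we are done. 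This is also exactly the form in which the corollary is used downstream (Theorem~\ref{teo.yoccoz_distortion} and Lemma~\ref{lema.area_proportion_bound} need the pointwise lower bound on $D_b$, not any extremality).

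You are instead attempting the stronger extremal statement: \emph{every} quasiconformal extension of the boundary map $\psi|_\R$ has dilatation $\gtrsim \log^2 a_{n+1}$. That is an interesting strengthening, but your plan for it has a genuine gap in Step~2. The affine shear $w\mapsto w + (T_- - T_+)(1-\Im w/\pi)$ restricts on the bottom edge to the \emph{constant} translation by $T_- - T_+$, which does not match boundary data that is asymptotic to translation by $T_-$ at $-\infty$ and by $T_+$ at $+\infty$; so it is not a competitor, and there is no reason its dilatation should be a lower bound for the problem. Moreover, the modulus argument you sketch with the quadrilaterals $[-A,A]\times[0,\pi]$ gives nothing: both the domain and the image have modulus $2A/\pi + O(1)$ as $A\to\infty$ (the bottom side of the image only shifts by the bounded amount $\log a$ on the left and $\log b$ on the right), so the modulus ratio tends to $1$. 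A single cross-ratio/quadrilateral obstruction of this flavour seems to yield at best $K\gtrsim \log a_{n+1}$, not the required $\log^2 a_{n+1}$; getting the square genuinely requires the Strebel/Teichm\"uller extremality for the hyperbolic-M\"obius-versus-identity problem on a half-disk, which is a different and more delicate argument than the one you outline. For the corollary as stated, none of this is needed.
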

\begin{proof}

    Observe that the distortion $K_\psi$ of $\psi$ is constant in the region $D_b$, and thus
    \[ \esssup_{z \in \h} K_\psi(z) = K_{\Psi_b} \asymp \log^2 a_{n+1}. \]
    This means that there exists some universal constant $C \geq 1$ such that
    \[ K_\psi(z) \geq \frac{\log^2 a_{n+1}}{C} \ \forall z \in D_b, \]
    meaning the map $\psi$ cannot be $((\log^2 a_{n+1}/C) - \varepsilon)$-quasiconformal.
    
\end{proof}

Recall that the points $s_j$ and $s'_j$ are the images of the bottom vertices of the cells $Q$ and $Q'$ after uniformization, and we have thus far reduced the problem of extending the piece-wise affine map between $\partial Q$ and $\partial Q'$ to the interiors of the cells, to that of extending the piece-wise affine homeomorphism of $\R$ that sends the points $s_j$ to the points $s'_j$. To that end, consider the piece-wise affine maps $\eta_1:\R \to \R$, defined by sending each $s_j$ to $\psi(s'_j)$, and $\eta_2: \R \to \R$, defined by sending each $\psi(s'_j)$ to $s'_j$, and set $\eta := \psi\circ \eta_2\circ \eta_1$. By showing that this map $\eta$ admits a quasiconformal extension of uniformly bounded distortion, we will be able to transfer the distortion control of the extension of $\psi$ described above back to our problem.

\begin{lema}\label{lema.eta_distortion}

    The map $\eta$ satisfies $\eta'(x) \asymp 1$ wherever the derivative is well defined.

\end{lema}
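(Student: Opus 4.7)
The plan is to apply the chain rule to $\eta = \psi \circ \eta_2 \circ \eta_1$ and match the blow-up of $\psi'$ against the compression of $(s'_j-s'_{j-1})/(s_j-s_{j-1})$, showing that the two effects cancel out exactly.

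First, I would observe that since $\eta_1$ is affine on $(s_{j-1}, s_j)$ with image $(\psi(s'_{j-1}), \psi(s'_j))$, and $\eta_2$ is affine on $(\psi(s'_{j-1}), \psi(s'_j))$ with image $(s'_{j-1}, s'_j)$, the composition $\eta_2 \circ \eta_1$ is itself affine from $(s_{j-1}, s_j)$ to $(s'_{j-1}, s'_j)$ with constant derivative $(s'_j-s'_{j-1})/(s_j-s_{j-1})$ — notably, the explicit values of $\psi$ at the points $s'_j$ drop out. Consequently, for $x \in (s_{j-1}, s_j)$,
\[ \eta'(x) = \psi'(y) \cdot \frac{s'_j - s'_{j-1}}{s_j - s_{j-1}}, \qquad y := \eta_2 \circ \eta_1(x) \in (s'_{j-1}, s'_j). \]
By Lemmas \ref{lema.uniform_bound_rotation} and \ref{teo.yoccoz_almost-parabolic_bound}, the second factor satisfies
\[ \frac{s'_j - s'_{j-1}}{s_j - s_{j-1}} \asymp \frac{\min\{j, k+1-j\}^2}{k}. \]
It remains to show $\psi'(y) \asymp k/\min\{j,k+1-j\}^2$ uniformly in $j$, the cell and the level, and outside $[-1,1]$ note that $\eta$ reduces to the identity.

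Next, I would split into the two halves $j \leq a$ and $j \geq a+1$ (with $k = a+b$, $a = \lfloor k/2 \rfloor$) and deal with the first case — the second being identical after reflection. On $[-1, s'_a]$, $\psi$ is conjugate to $\zeta_a$ via the affine change of variables $u = (s'_a - x)/(1+s'_a)$, so $\psi'(x) = \zeta_a'(u)$. The points $s'_j$ are uniformly spaced in $[-1,1]$ (by Lemma \ref{lema.uniform_bound_rotation} applied with $\phi'$ being affine in the real coordinate and $p$ identity on $[-1,1]$), so $s'_a \asymp 0$, $1+s'_a \asymp 1$, and the corresponding points $u_j = (a-j)/a$ satisfy $|u_j - u_{j-1}| \asymp 1/a$. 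The Remark following Proposition \ref{prop.mobius_family_properties} then guarantees that $\zeta_a'$ is comparable to a constant across each interval $(u_j, u_{j-1})$, so it suffices to evaluate at any endpoint. A direct computation using $\zeta_a'(u_j) = a/(a - (a-1)u_j)^2$ and the identity $a - (a-1)u_j = (a + j(a-1))/a \asymp j+1$ (on scale $a$) gives
\[ \psi'(y) \asymp \zeta_a'(u_j) \asymp \frac{a}{(j+1)^2} \asymp \frac{k}{\min\{j, k+1-j\}^2} \]
for $y \in (s'_{j-1}, s'_j)$ and $1 \leq j \leq a$. Multiplying the two asymptotics produces $\eta'(x) \asymp 1$.

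The analogous computation on $[s'_a, 1]$, using $v = (x - s'_a)/(1-s'_a)$ and writing $i := k-j$, yields $\zeta_b'(v_j) \asymp b/(i+1)^2 \asymp k/\min\{j, k+1-j\}^2$, closing the argument. I do not expect any real obstacle beyond bookkeeping: the main substantive point is the cancellation observed above, namely that the quadratic concentration of the $s_j$ predicted by Yoccoz's almost-parabolic bound is exactly matched, term by term, by the quadratic expansion of the Möbius factor $\zeta_a$ (respectively $\zeta_b$) near its repelling fixed point. Care is needed only to verify that the Remark applies uniformly — that $|u_j - u_{j-1}| \asymp 1/a$ with constants independent of the cell — and to handle the transition at $j=a$, which is harmless since each interval $(s'_{j-1}, s'_j)$ lies fully on one of the two sides where $\psi$ is smoothly Möbius.
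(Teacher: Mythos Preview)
Your argument is correct and uses the same ingredients as the paper, but with a cleaner regrouping. The paper factors $\eta=(\psi\circ\eta_2)\circ\eta_1$ and shows each factor has derivative $\asymp 1$: first $\eta_1'\asymp 1$ by proving $\psi(s'_j)-\psi(s'_{j-1})\asymp 1/\min\{j,k+1-j\}^2$ via the Remark after Proposition~\ref{prop.mobius_family_properties}, then $(\psi\circ\eta_2)'\asymp 1$ by the same derivative computation for $\zeta_a$ that you carry out. Your observation that $\eta_2\circ\eta_1$ collapses to the piecewise-affine map $s_j\mapsto s'_j$ bypasses the first of these two steps entirely, which is a genuine streamlining: you never need to locate the intermediate points $\psi(s'_j)$.

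One technical slip to fix: you write ``$u_j=(a-j)/a$'' and then invoke the algebraic identity $a-(a-1)u_j=(a+j(a-1))/a$, but the first equality is only approximate (the $s'_j$ are only \emph{comparable} to a uniform partition, not equal to one). Since $u_j\asymp(a-j)/a$ does \emph{not} by itself imply $1-u_j\asymp j/a$ when $j/a$ is small, you should instead argue directly that
\[
1-u_j=\frac{1+s'_j}{1+s'_a}\asymp\frac{j/k}{a/k}=\frac{j}{a},
\]
using $1+s'_j\asymp j/k$ from Lemma~\ref{lema.uniform_bound_rotation}; then $a-(a-1)u_j=a(1-u_j)+u_j\asymp j$ for $j\ge 1$, which is what you need. (The paper's own computation at this step is equally informal, so this is bookkeeping rather than a real gap.) Also, ``$s'_a\asymp 0$'' is not meaningful; you mean $1+s'_a\asymp 1$, which is what you actually use.
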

\begin{proof}

    We will actually show that this is true for $\eta_1$ and $\psi\circ \eta_2$ independently. For $\eta_1$, we remember that, from Lemma \ref{teo.yoccoz_almost-parabolic_bound}, one has
    \[ s_j - s_{j-1} \asymp \frac{1}{\min\{ j, k + 1 - j \}^2}. \]
    It is enough to show that the points $\psi(s'_j)$ satisfy the same property. Indeed, let us first take $j \leq a$, so that $s'_j, s'_{j-1} \leq s'_a$. Then
    \[ \psi(s'_j) - \psi(s'_{j-1}) = (1 + s'_a)\left[ \zeta_a\left( \frac{s'_a - s'_{j-1}}{1 + s'_a} \right) - \zeta_a\left( \frac{s'_a - s'_j}{1 + s'_a} \right) \right]. \]
    Since we have, from Lemma \ref{lema.uniform_bound_rotation}, 
    \[ \frac{s'_a - s'_{j-1}}{1 + s'_a} = \frac{s'_a - s'_j}{1 + s'_a} + \frac{s'_j - s'_{j-1}}{1 + s'_a}, \]
    and
    \[ \frac{s'_j - s'_{j-1}}{1 + s'_a} \asymp \frac{1}{k} \asymp \frac{1}{a}, \]
    we conclude using Proposition \ref{prop.mobius_family_properties} (or rather the remark right after it) that
    \[ \psi(s'_j) - \psi(s'_{j-1}) \asymp \frac{1}{a^2}\left( 1 - \frac{s'_a - s'_j}{1 + s'_a} \right)^{-2} \asymp \frac{1}{j^2} \]
    since $1 + s'_j = s'_j - s'_0 \asymp j/k$. The case when $j > a$ is analogous and uses the observation that $1 - s'_j \asymp (k - j)/k$.

    For the composition $\psi\circ \eta_2$, we get by the chain rule that
    \[ D(\psi\circ \eta_2)(z) = D\psi(\eta_2(z))\cdot D\eta_2(z) \]
    and by the definition of $\eta_2$, we have
    \[ D\eta_2(z) = \frac{s'_j - s'_{j-1}}{\psi(s'_j) - \psi(s'_{j-1})} \asymp \frac{\min\{j, k + 1 - j\}^2}{k} \]
    whenever $z \in [\psi(s'_{j-1}), \psi(s'_j)]$. Let us now assume that $j \leq a$ --- the case $j > a$ will again be completely analogous. Since $\psi$ is conjugated to $\zeta_a$ by an affine map in the interval $[-1, s'_a]$, we get that
    \[ D\psi(\eta_2(z)) = D\zeta_a\left( \frac{-\eta_2(z) + s'_a}{1 + s'_a} \right) \]
    whenever $z \in [\psi(s'_{j-1}), \psi(s'_j)]$. By Proposition \ref{prop.mobius_family_properties}, we conclude that
    \[ D\zeta_a\left( \frac{-s'_j + s'_a}{1 + s'_a} \right) \leq D\psi(\eta_2(z)) \leq D\zeta_a\left( \frac{-s'_{j-1} + s'_a}{1 + s'_a} \right) \asymp D\zeta_a\left( \frac{-s'_j + s'_a}{1 + s'_a} \right). \]
    By direct computations, we see that
    \[ D\zeta_a\left( \frac{-s'_j + s'_a}{1 + s'_a} \right) = \frac{a}{\left( a - (a - 1)\frac{-s'_j + s'_a}{1 + s'_a} \right)^2} \asymp \frac{k}{\left( k - (k - 1)\frac{k - j}{k} \right)^2} \asymp \frac{k}{j^2} \]
    concluding that $D(\psi\circ \eta_2)(z) \asymp 1$.

\end{proof}
\begin{cor}

    The extension $\eta(x + iy) := \eta(x) + iy$ is $K$-quasiconformal, with $K \asymp 1$.
    
\end{cor}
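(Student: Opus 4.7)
The plan is to compute the Beltrami coefficient of the extension $\Phi(x+iy):=\eta(x)+iy$ directly from the formula, reducing the distortion bound to the derivative estimate $\eta'(x)\asymp 1$ already established in Lemma \ref{lema.eta_distortion}. Writing $\Phi=u+iv$ with $u(x,y)=\eta(x)$ and $v(x,y)=y$, the partial derivatives are $u_x=\eta'(x)$, $u_y=0$, $v_x=0$, $v_y=1$, and hence
\[
\partial\Phi=\frac{\eta'(x)+1}{2},\qquad \overline{\partial}\Phi=\frac{\eta'(x)-1}{2},
\]
so the Beltrami coefficient at $x+iy$ is
\[
\mu_{\Phi}(x+iy)=\frac{\eta'(x)-1}{\eta'(x)+1}
\]
whenever $\eta'(x)$ exists.

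Next I would invoke Lemma \ref{lema.eta_distortion} to pick universal constants $0<c\leq C$ with $c\leq\eta'(x)\leq C$ almost everywhere, independently of the cell $Q$ and of the level $n$. Setting
\[
k:=\max\left\{\frac{C-1}{C+1},\frac{1-c}{1+c}\right\}<1,
\]
one obtains $|\mu_{\Phi}(z)|\leq k$ a.e., and therefore the pointwise dilatation satisfies
\[
K_{\Phi}(z)=\frac{1+|\mu_{\Phi}(z)|}{1-|\mu_{\Phi}(z)|}\leq \frac{1+k}{1-k}\asymp 1,
\]
with constants depending only on the universal constants hidden in $\eta'(x)\asymp 1$, hence independent of $Q$ and $n$.

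Finally, one has to check the Sobolev regularity $\Phi\in W^{1,2}_{\mathrm{loc}}(\h)$ required in the definition of a quasiconformal map. Since $\eta$ is piece-wise affine with uniformly bounded slopes (once more by Lemma \ref{lema.eta_distortion}), the extension $\Phi$ is globally Lipschitz on horizontal strips and hence locally in $W^{1,2}$; the finitely many vertical lines where $\eta'$ is undefined form a set of measure zero and can be ignored in the Beltrami equation. Injectivity is immediate from the fact that $\eta$ is a strictly increasing homeomorphism of $\R$ and $\Phi$ preserves the horizontal foliation. I do not anticipate a genuine obstacle here: the only subtlety is bookkeeping that the constants in $\eta'(x)\asymp 1$ and in the resulting bound on $k$ are genuinely universal, which is guaranteed because Lemma \ref{lema.eta_distortion} was proved with constants independent of the cell $Q$ and level $n$.
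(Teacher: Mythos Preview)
Your argument is correct and is precisely the standard computation the paper has in mind; the corollary is stated there without proof as an immediate consequence of Lemma \ref{lema.eta_distortion}. One small slip: $\eta=\psi\circ\eta_2\circ\eta_1$ is not piece-wise affine (the factor $\psi$ is piece-wise M\"obius), but since $\eta'\asymp 1$ the map is still Lipschitz and hence in $W^{1,2}_{\mathrm{loc}}$, so your regularity check goes through unchanged.
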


We are finally ready to define the Yoccoz extension.

\begin{deft}

    The \textit{Yoccoz extension} $\Y(h)$ of $h$ is the homeomorphic extension to the piece-wise affine map from $\Gamma$ to $\Gamma'$ determined in each cell $Q$ of the grid $\Gamma$ as the composition
    \[ \Y(h)|_Q = \phi'^{-1}\circ p^{-1}\circ \psi^{-1}\circ \eta\circ p\circ \phi. \]
    
\end{deft}

\begin{figure}
    \centering
    \includegraphics[width=0.8\linewidth]{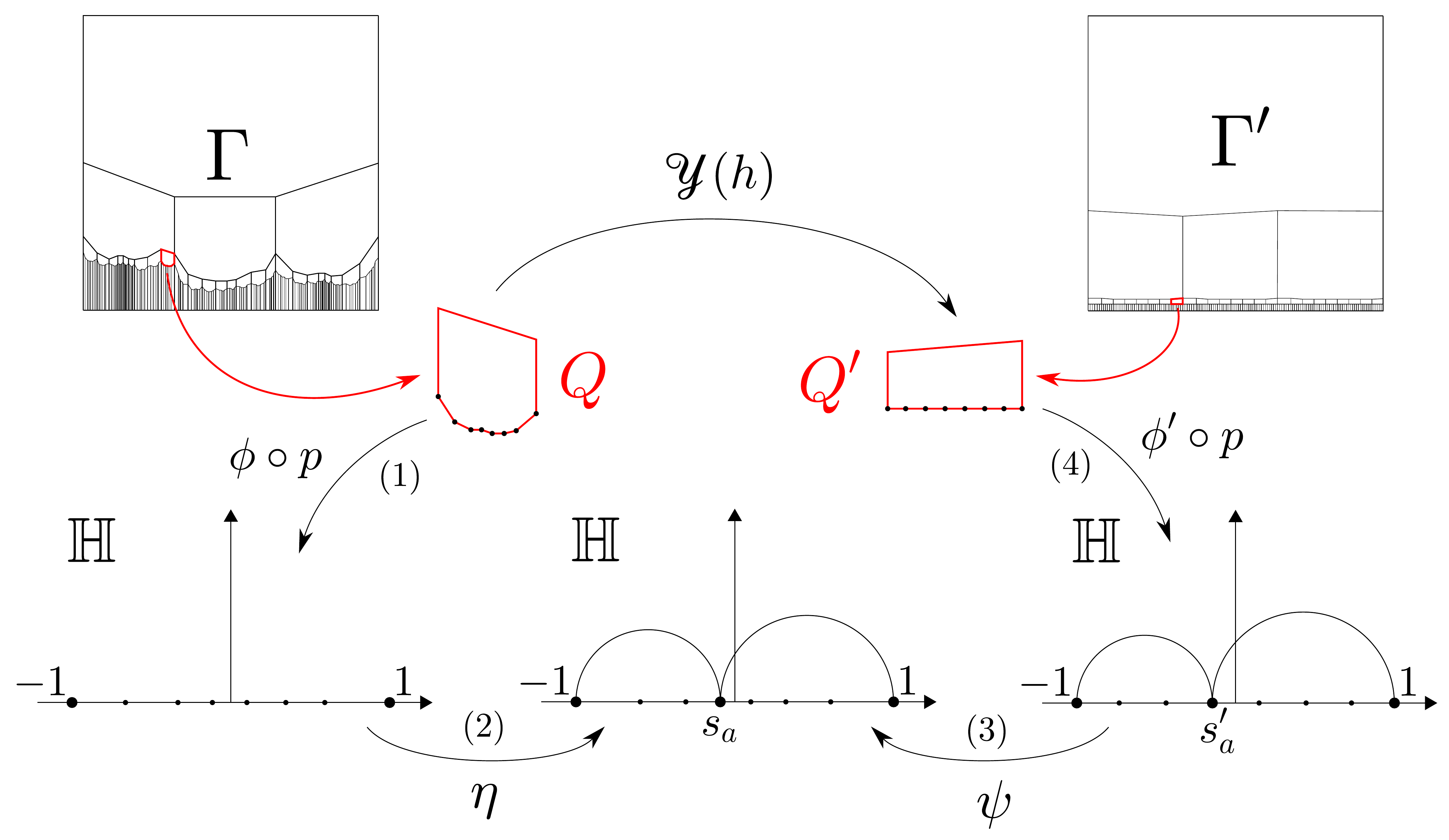}
    \caption{All of the steps to define the Yoccoz extension on a cell $Q$. Step (1) uniformizes the cell into the upper half-plane, maintaining the distribution of base points; step (2) redistributes these points in a controlled way (bounded derivative) so that they are now the image of a uniform distribution under the piecewise M\"{o}bius map; step (3) applies the (inverse of the) piecewise M\"{o}bius map, which has repelling points at the extremes $\pm 1$, and an attracting point at $s_a'$; step (4) applies the (inverse of the) uniformization for the cell $Q' = \Y(h)(Q)$.}
    \label{fig:extension_full_picture}
\end{figure}

\begin{teo}\label{teo.yoccoz_distortion}

    Let $Q$ be a cell of level $n$. Then the map $\Y(h)$ satisfies
    \[ \esssup_{z \in Q} K_{\Y(h)}(z) \asymp \log^2 a_{n+1}. \]
    
\end{teo}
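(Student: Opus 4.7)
The plan is to prove the two comparisons separately, with both following essentially from the submultiplicativity of the dilatation under composition, combined with the fact that every factor in the defining formula $\Y(h)|_Q = \phi'^{-1}\circ p^{-1}\circ \psi^{-1}\circ \eta\circ p\circ \phi$ other than $\psi$ has a dilatation bounded by a universal constant that does not depend on the cell $Q$ or the level $n$.

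For the upper bound $\esssup_{z\in Q}K_{\Y(h)}(z) \lesssim \log^2 a_{n+1}$, I would apply the inequality $K_{f\circ g}(z) \leq K_f(g(z)) K_g(z)$ iteratively to the six factors above and take essential suprema. By Lemma \ref{lema.geometric_control} together with the explicit formula for $\phi$ given just before, the dilatations of $\phi$ and $\phi'^{-1}$ are uniformly bounded in $Q, n$; the map $p$ (and hence $p^{-1}$) was built once and for all as a fixed quasiconformal map, so its dilatation is a universal constant; and $K_\eta$ is uniformly bounded by the Corollary following Lemma \ref{lema.eta_distortion}. The only factor contributing nontrivially is $\psi^{-1}$, whose dilatation agrees with that of $\psi$ and is bounded above by $\tfrac{4}{\pi^2}\log^2 a_{n+1} + \mathcal{O}(\log a_{n+1})$ by Lemma \ref{lema.strebel}. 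Multiplying these bounds gives the desired upper estimate.

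For the lower bound $\esssup_{z\in Q}K_{\Y(h)}(z) \gtrsim \log^2 a_{n+1}$, the strategy is to ``factor out'' $\psi^{-1}$ and apply Corollary \ref{cor.optimal_distortion_interpolation}. Writing $A := \phi'^{-1}\circ p^{-1}$ and $B := \eta\circ p\circ \phi$, both have uniformly bounded dilatation by the same reasoning as above, and $\Y(h)|_Q = A\circ \psi^{-1}\circ B$. Hence $\psi^{-1} = A^{-1}\circ \Y(h)|_Q\circ B^{-1}$, and the same submultiplicative inequality applied to this identity, together with the fact that inversion preserves the dilatation, yields
\[
\esssup_{w\in\h} K_{\psi^{-1}}(w) \leq \bigl(\esssup K_{A^{-1}}\bigr)\bigl(\esssup K_{\Y(h)|_Q}\bigr)\bigl(\esssup K_{B^{-1}}\bigr).
\]
Since $K_{A^{-1}}$ and $K_{B^{-1}}$ are universally bounded and $\esssup K_{\psi^{-1}} = \esssup K_\psi \geq \log^2 a_{n+1}/C$ by Corollary \ref{cor.optimal_distortion_interpolation}, the lower bound follows.

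I expect no serious obstacle: the nontrivial analytic inputs (the geometric control of cells, the Yoccoz almost-parabolic bound, the construction of the Möbius interpolation $\psi$ and its sharpness) have all already been established, and the remaining argument is purely the bookkeeping of dilatations under composition. The one point that merits slight care is ensuring that in the lower bound step one really can apply Corollary \ref{cor.optimal_distortion_interpolation} on a set of positive measure that is mapped into $Q$ by $B^{-1}$; but since $B$ is a homeomorphism with bounded distortion sending $Q$ onto $\h$, the preimage of the half-disk $D_b$ appearing in that corollary is a positive-measure subset of $Q$, so the essential supremum in $Q$ inherits the lower bound.
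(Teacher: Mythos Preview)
Your proposal is correct and follows essentially the same route as the paper: the upper bound via submultiplicativity of dilatation under composition, and the lower bound by inverting the composition so that $\psi^{-1}$ sits in the middle and its sharp dilatation (Corollary~\ref{cor.optimal_distortion_interpolation}) forces a lower bound on $K_{\Y(h)|_Q}$. The paper packages the lower-bound step as a separate Lemma~\ref{lema.distortion_composition_lower_bound} phrased in terms of quadrilateral moduli, whereas you write out $\psi^{-1} = A^{-1}\circ\Y(h)|_Q\circ B^{-1}$ and apply the pointwise dilatation inequality directly; these are equivalent, and your final remark about positive measure, while not strictly needed for the essential-supremum statement, does no harm.
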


The proof really comes from all of the controls we obtained for the maps involved in the definition, and the following simple observation about quasiconformal maps:

\begin{lema}\label{lema.distortion_composition_lower_bound}

    Let $g_1, g_2, g_3$ be $K_1, K_2, K_3$-quasiconformal, respectively. Assume that $g_2$ is not $(K_2 - \varepsilon)$-quasiconformal for any $\varepsilon > 0$. Then the composition $g_1\circ g_2\circ g_3$ is not $\left( \frac{K_2}{K_1K_3} - \varepsilon \right)$-quasiconformal for any $\varepsilon > 0$.
    
\end{lema}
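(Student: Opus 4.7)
The plan is to derive the desired lower bound by contradiction, exploiting the two standard facts about quasiconformal maps recalled in the introduction: if $f$ is $K$-quasiconformal then so is $f^{-1}$, and if $f$ is $K$-quasiconformal and $g$ is $K'$-quasiconformal, then $f\circ g$ is $KK'$-quasiconformal. The key move is to write $g_2$ as the composition $g_2 = g_1^{-1}\circ (g_1\circ g_2\circ g_3)\circ g_3^{-1}$, so that any upper bound on the distortion of the full composition $g_1\circ g_2\circ g_3$ propagates to an upper bound on the distortion of $g_2$.

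More concretely, I would proceed as follows. Suppose for contradiction that there exists $\varepsilon > 0$ such that the composition $g_1\circ g_2\circ g_3$ is $\left( K_2/(K_1K_3) - \varepsilon \right)$-quasiconformal. Since $g_1^{-1}$ is $K_1$-quasiconformal and $g_3^{-1}$ is $K_3$-quasiconformal, applying the submultiplicativity $K_{f\circ g}\leq K_f K_g$ twice to the identity $g_2 = g_1^{-1}\circ (g_1\circ g_2\circ g_3)\circ g_3^{-1}$ yields
\[ K_{g_2} \leq K_1\cdot \left( \frac{K_2}{K_1K_3} - \varepsilon \right)\cdot K_3 = K_2 - \varepsilon K_1K_3. \]
Setting $\delta := \varepsilon K_1 K_3 > 0$, this means $g_2$ is $(K_2 - \delta)$-quasiconformal, which directly contradicts the assumption that $g_2$ is not $(K_2 - \delta')$-quasiconformal for any $\delta' > 0$. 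Hence no such $\varepsilon$ exists, which is precisely the claim.

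There is essentially no obstacle in this argument: the only thing worth double-checking is that the submultiplicativity of $K_\cdot$ and the invariance of $K_\cdot$ under inversion are being used exactly as stated in the introduction, and that the bookkeeping $K_1\cdot (K_2/(K_1K_3))\cdot K_3 = K_2$ really does cancel cleanly so that the term $\varepsilon K_1K_3$ is strictly positive. Everything else is formal.
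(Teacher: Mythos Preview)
Your argument is correct. It differs in presentation from the paper's proof, which works directly with the modulus of quadrilaterals: the paper picks, for each $\varepsilon>0$, a quadrilateral $A$ with $m(g_2(A)) > (K_2-\varepsilon)m(A)$ and then chains the two-sided modulus bounds for $g_1$ and $g_3^{-1}$ to exhibit a quadrilateral (namely $g_3^{-1}(A)$) whose modulus is distorted by more than $(K_2-\varepsilon)/(K_1K_3)$ under $g_1\circ g_2\circ g_3$. Your route is the contrapositive packaged through the two facts from the introduction ($K_{f^{-1}}=K_f$ and $K_{f\circ g}\le K_fK_g$) applied to $g_2=g_1^{-1}\circ(g_1\circ g_2\circ g_3)\circ g_3^{-1}$. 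The two arguments are equivalent in content---the submultiplicativity you invoke is precisely what the paper's modulus chain encodes---but yours has the slight advantage of using only statements already recorded in the paper, while the paper's version makes the geometric mechanism (a specific quadrilateral witnessing the lower bound) visible.
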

\begin{proof}

    Since $g_2$ is not $(K_2 - \varepsilon)$-quasiconformal, we must have that there is some quadrilateral $A$ such that $m(g_2(A)) > (K_2 - \varepsilon)m(A)$, where $m$ denotes the modulus. Thus, we find that
    \[ m(g_1\circ g_2\circ g_3(g_3^{-1}(A))) \geq \frac{1}{K_1}m(g_2\circ g_3(g_3^{-1}(A))) > \frac{K_2 - \varepsilon}{K_1}m(g_3(g_3^{-1})(A)) > \frac{K_2 - \varepsilon}{K_1K_3}m(g_3^{-1}(A)) \]
    which implies the claim.
    
\end{proof}

\begin{proof}[Proof of Theorem \ref{teo.yoccoz_distortion}]

    Obtaining $\log^2 a_{n+1}$ as an upper bound comes from the construction itself, since all of the maps involved have universally bounded distortion, except $\psi^{-1}$, whose distortion is bounded by (something comparable to) $\log^2 a_{n+1}$ --- and remember that when two maps are $K_1$ and $K_2$ quasiconformal, respectively, their composition is $K_1K_2$ quasiconformal. By Corollary \ref{cor.optimal_distortion_interpolation}, the map $\psi^{-1}$ is not $((\log^2 a_{n+1}/C) - \varepsilon)$-quasiconformal for some universal constant $C \geq 1$. Lemma \ref{lema.distortion_composition_lower_bound} thus tells us that there is another $C' \geq 1$ such that $\Y(h)$ is not $((\log^2 a_{n+1}/C') - \varepsilon)$-quasiconformal ($C'$ is the product of $C$ with the distortions of the maps $\phi'^{-1}\circ p$ and $\eta\circ p\circ \phi$).
    
\end{proof}

\section{Proof of the main results}

In this section, we prove Theorem \ref{teo.main}.

\subsection{Supporting lemmas}

Before proving the main results, we need to establish some supporting lemmas: the first three give lower bounds on area controls, which are relevant when using the regularity of the extension to obtain information about the rotation number; the fourth is an analogue of Corollary \ref{cor.aprioribounds} for the rotation; and the fifth is a finer inequality relating the values of $q_n$ with the terms $a_n$.

\begin{lema}\label{lema.cell_area_lower_bound}

    Let $Q$ be a cell of the grid $\Gamma$ at level $n$. Then $\Area(Q) \gtrsim \varepsilon^{2n}$ for $\varepsilon \in (0, 1)$ the constant from Corollary \ref{cor.aprioribounds}.

\end{lema}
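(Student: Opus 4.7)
Plan: Since Corollary \ref{cor.aprioribounds} gives $|I| \gtrsim \varepsilon^n$ for the base interval $I = [t_0, t_k]$ of $Q$ in $\mathcal{Q}_n$, it suffices to show $\Area(Q) \gtrsim |I|^2$ with a universal constant. To this end, I will exhibit a corner triangle $T \subseteq Q$ whose area is $\asymp |I|^2$.

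Let $A := z_n(t_0)$ denote the top-left vertex of $Q$. By Lemma \ref{lema.geometric_control}, the top side and the left side of $Q$, both adjacent to $A$, have length $\asymp |I|$, and the interior angle $\theta_A$ between them is uniformly bounded away from $0$ and $\pi$. For a small universal constant $c > 0$ (to be chosen), let $T$ be the triangle with vertex $A$ and legs of length $c|I|$ along the top and left sides of $Q$. Then
\[
\Area(T) = \tfrac{1}{2}(c|I|)^2 \sin \theta_A \asymp |I|^2,
\]
with uniform comparison constants.

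The heart of the argument is the verification that $T \subseteq Q$, which amounts to showing that the closed disk $\overline{B(A, c|I|)}$ does not meet any side of $Q$ other than the top and left sides. The right side of $Q$ is at horizontal distance $|I|$ from $A$; each bottom segment $[z_{n+1}(t_j), z_{n+1}(t_{j+1})]$ with $j \geq 1$ has its entire $x$-range inside $[t_1, t_k]$, so by Theorem \ref{teo.aprioribounds} it lies at horizontal distance $\geq t_1 - t_0 \asymp |I|$ from $A$. The only delicate case is the first bottom segment $[z_{n+1}(t_0), z_{n+1}(t_1)]$, which shares the endpoint $z_{n+1}(t_0)$ with the left side of $Q$; a short foot-of-perpendicular computation, using that the slope $|y_{n+1}(t_1) - y_{n+1}(t_0)|/(t_1 - t_0)$ is uniformly bounded, shows its distance to $A$ is comparable to the left side length, hence $\asymp |I|$. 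Choosing $c$ smaller than all of the resulting implicit constants then yields $T \subseteq Q$, completing the proof.

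The main potential obstacle lies in this last verification, since the first bottom segment actually starts at a point only $\asymp |I|$ away from $A$; without the bounded-slope condition it could in principle swing closer to $A$ than its endpoint. The quantitative control from Lemma \ref{lema.geometric_control} and Theorem \ref{teo.aprioribounds} is precisely what prevents this degeneracy.
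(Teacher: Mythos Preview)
Your proof is correct and follows essentially the same approach as the paper: the paper simply asserts, as a consequence of Lemma~\ref{lema.geometric_control}, that $Q$ contains a square of side $\ell \asymp |I|$, whereas you make this explicit by constructing a corner triangle and verifying its inclusion in $Q$. One minor point: the estimate $t_1 - t_0 \asymp |I|$ that you invoke is the $j=1$ case of Yoccoz's almost-parabolic bound (Lemma~\ref{teo.yoccoz_almost-parabolic_bound}), not Theorem~\ref{teo.aprioribounds}, which only compares adjacent intervals at the \emph{same} level.
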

\begin{proof}

    Let $I$ be the interval of the partition at level $n$ that bounds $Q$. A consequence of Lemma \ref{lema.geometric_control} is that $Q$ must contain a square of side $\ell \asymp |I|$. Since $|I| \gtrsim \varepsilon^n$ by Corollary \ref{cor.aprioribounds}, and the area of such a square is $\ell^2$, we conclude the result.
    
\end{proof}

\begin{lema}\label{lema.area_proportion_bound}

    Let $Q$ be a cell of the grid $\Gamma$ at level $n$. Then there are universal constants $C \geq 1$ and $\lambda \in (0, 1)$ not depending on $Q$ nor $n$ such that
    \[ \Area\left\{ z \in Q \ \left| \ K_{\Y(h)}(z) \geq \frac{\log^2a_{n+1}}{C} \right.\right\} \geq \lambda\Area(Q). \]

\end{lema}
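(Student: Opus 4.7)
The plan is to identify a subset $E\subset Q$ with $\Area(E)\gtrsim \Area(Q)$ on which $K_{\Y(h)}(z)\gtrsim \log^2 a_{n+1}$. The natural candidate is $E := F^{-1}(D_b)$, where $F := \eta\circ p\circ\phi$ and $D_b$ is the half-disk from the proof of Lemma \ref{lema.strebel} on which the qc extension of $\psi^{-1}$ has constant distortion $K_{\Psi_b}\asymp \log^2 a_{n+1}$.

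\emph{Distortion bound on $E$.} Set $G := \phi'^{-1}\circ p^{-1}$. Both $F$ and $G$ are compositions of maps with uniformly bounded qc distortion, so there is a universal $K_0\geq 1$ with $K_F,K_{F^{-1}},K_G,K_{G^{-1}}\leq K_0$ a.e. The geodesic bounding $D_b$ is invariant under $\tilde\zeta_b$, so $\psi^{-1}(D_b)=D_b$ and $K_{\psi^{-1}}\equiv K_{\Psi_b}$ on $D_b$. Writing $\psi^{-1}=G^{-1}\circ \Y(h)|_Q\circ F^{-1}$ and applying the pointwise chain rule $K_{f\circ g}(z)\leq K_f(g(z))K_g(z)$ at a.e.\ $z\in E$ yields
\[ \frac{\log^2 a_{n+1}}{C_1} \lesssim K_{\psi^{-1}}(F(z)) \leq K_{G^{-1}}(\Y(h)(z))\, K_{\Y(h)}(z)\, K_{F^{-1}}(F(z)) \leq K_0^2\, K_{\Y(h)}(z), \]
hence $K_{\Y(h)}(z)\geq \log^2 a_{n+1}/C$ for some universal $C$.

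\emph{Area bound on $E$.} I would track $D_b$ backward through each of the three maps in turn: \emph{(i)} Lemma \ref{lema.uniform_bound_rotation} together with $a=\lfloor k/2\rfloor$ forces the attracting fixed point $s'_a$ of $\psi$ to be uniformly bounded away from $\pm 1$; thus $D_b$ has area $\asymp 1$ and lies in a fixed compact subset of $\h$ uniformly in $n$. \emph{(ii)} Since $\eta^{-1}(x+iy)=\eta^{-1}(x)+iy$ with derivative $\asymp 1$ (Lemma \ref{lema.eta_distortion}), its Jacobian is $\asymp 1$ pointwise and $\Area(\eta^{-1}(D_b))\asymp 1$, still contained in a fixed compact subset of $\h$. \emph{(iii)} The map $p$ is fixed (independent of $n$ and $Q$), so on any fixed compact subset of $\h$ the Jacobian of $p^{-1}$ is bounded above and below by positive constants; hence $\Area(p^{-1}\eta^{-1}(D_b))\asymp 1$. \emph{(iv)} From the explicit formula for $\phi$ one computes $J_\phi(x+iy)=4/\bigl(|I|(g_1(x)-g_2(x))\bigr)$; Lemma \ref{lema.geometric_control} combined with Theorem \ref{teo.aprioribounds} forces $g_1(x)-g_2(x)\asymp |I|$ uniformly on $[t_0,t_k]$, so $J_\phi\asymp 1/|I|^2$ pointwise and the change of variables gives $\Area(\phi^{-1}B)\asymp |I|^2\Area(B)$ for any measurable $B\subset S$. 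Applied to $B := p^{-1}\eta^{-1}(D_b)$, we get $\Area(E)\asymp |I|^2\asymp \Area(Q)$, the last comparability following from Lemma \ref{lema.cell_area_lower_bound} and the upper bound $\Area(Q)\lesssim |I|^2$ implicit in the geometric control.

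The main obstacle is step (iv): qc maps between domains of different scales do not in general admit pointwise Jacobian comparisons, so this is where the argument could be delicate. Here we are saved by the explicit piecewise-affine-in-$x$, affine-in-vertical-fibers form of $\phi$ together with the cell's geometric control, which reduces the Jacobian estimate to the uniform comparability of $g_1(x)-g_2(x)$ with $|I|$ throughout $[t_0,t_k]$.
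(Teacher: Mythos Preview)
Your proof is correct and follows essentially the same strategy as the paper's: identify the preimage of the half-disk(s) under $\eta\circ p\circ\phi$ as the large-distortion set, bound $K_{\Y(h)}$ from below there via the composition rule, and pull the area back through each factor. The only cosmetic differences are that the paper works with $D_a\cup D_b$ (so the initial area bound is immediate --- the union always contains a half-disk of radius $1/2$, without needing to locate $s'_a$) and handles your step (iv) by a Fubini argument, observing that $\phi$ is affine on vertical fibers and hence preserves fiberwise length proportions, rather than by a pointwise Jacobian estimate.
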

\begin{proof}

    Let us recall that the Yoccoz extension $\Y(h)$ on the cell $Q$ is obtained as a composition of maps
    \[ \Y(h) = \phi'^{-1}\circ p^{-1}\circ \psi^{-1}\circ \eta\circ p\circ \phi, \]
    where $\phi, \phi', p, \eta$ are all quasiconformal with universally bounded distortion, and the map $\psi$ is $K_n$-quasiconformal, with $K_n = 4\log^2a_{n+1}/\pi^2 + \mathcal{O}(\log a_{n+1})$, by Lemma \ref{lema.strebel}. The distortion of $\psi$ is in fact constant in the regions $D_a, D_b$ defined in the proof of that Lemma, and by combining Corollary \ref{cor.optimal_distortion_interpolation} and Lemma \ref{lema.distortion_composition_lower_bound}, we conclude that $\Y(h)|_Q$ has distortion bounded from below by $\log^2 a_{n+1}/C$ for some universal constant $C$ in the region $(\eta\circ p\circ \phi)^{-1}(D_a\cup D_b)$. All we must do now is show that this region occupies a constant proportion of the area of $Q$.

    Since $D_a$ and $D_b$ are halves of disks with centers in $[-1, 1]$ and diameters adding up to $2$, we must have that $D_a\cup D_b$ always contains a half disk of radius $1/2$ --- either the one centered at $-1/2$ or the one centered at $1/2$, and thus $\Area(D_a\cup D_b) \geq \pi/4$. Now, since $\eta'(x) \asymp 1$ along the real line, we must have that $\eta$ distorts areas in $\h$ in a controlled way, i.e. there is a constant $A_1$ not depending on $Q$ nor $n$ such that $\Area(\eta^{-1}(D_a\cup D_b)) \geq A_1$. Since $p$ is fixed and equal to identity in the interval $[-1, 1]$, we must have another constant $A_2$ not depending on $Q$ nor $n$ such that $\Area(p^{-1}\circ \eta^{-1}(D_a\cup D_b)) \geq A_2$. Let now $\lambda = A_2/\Area(S) = A_2/4$, so that
    \[ \Area(p^{-1}\circ \eta^{-1}(D_a\cup D_b)) \geq \lambda\Area(S). \]
    Since $\phi$ preserves the area proportions of sets (it is affine in vertical fibers, i.e. it preserves length proportions in vertical fibers, and we can apply Fubini's theorem), we conclude that
    \[ \Area(\phi^{-1}\circ p^{-1}\circ \eta^{-1}(D_a\cup D_b)) \geq \lambda\Area(Q). \]

\end{proof}

\textbf{Remark.} Through the exact same arguments, we prove a similar result for the inverse $\Y(h)^{-1}$.

\begin{lema}\label{lema.area_proportion_bound_rotation}

    Let $Q'$ be a cell of the grid $\Gamma'$ at level $n$. Then there are universal constants $C \geq 1$ and $\lambda \in (0, 1)$ not depending on $Q'$ nor $n$ such that
    \[ \Area\left\{ z \in Q' \ \left| \ K_{\Y(h)^{-1}}(z) \geq \frac{\log^2a_{n+1}}{C} \right.\right\} \geq \lambda\Area(Q'). \]
    
\end{lema}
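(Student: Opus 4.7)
The plan is to mirror the proof of Lemma \ref{lema.area_proportion_bound}, working on the $\Gamma'$ side of the construction. Recall that on a cell $Q'$, the inverse Yoccoz extension decomposes as
\[ \Y(h)^{-1}|_{Q'} = \phi^{-1}\circ p^{-1}\circ \eta^{-1}\circ \psi\circ p\circ \phi', \]
where $\phi, \phi', p, \eta$ are all quasiconformal with universally bounded distortion, and where $\psi$ (being a homeomorphism of the sphere) has the same distortion as $\psi^{-1}$: namely, $K_\psi(z) \asymp \log^2 a_{n+1}$ uniformly on the half-disks $D_a, D_b$ described in Lemma \ref{lema.strebel}.

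The first step is a pointwise distortion transfer. Since composition of quasiconformal maps can increase pointwise dilatation by at most a product, and since dilatation is invariant under inversion, a standard calculation (identical in spirit to Lemma \ref{lema.distortion_composition_lower_bound}, but pointwise) gives
\[ K_{\Y(h)^{-1}}(z) \;\gtrsim\; \frac{K_\psi\bigl((p\circ \phi')(z)\bigr)}{K_{g_1}\,K_{g_2}} \]
for $z \in Q'$, where $g_1 = \phi^{-1}\circ p^{-1}\circ \eta^{-1}$ and $g_2 = p\circ \phi'$ have universally bounded distortion. Hence on the pullback region $E := (p\circ \phi')^{-1}(D_a\cup D_b)\subset Q'$, the dilatation of $\Y(h)^{-1}$ is at least $\log^2 a_{n+1}/C$ for some universal $C$.

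It remains to show that $\Area(E) \geq \lambda\,\Area(Q')$ for some universal $\lambda > 0$. This is where the argument is actually \emph{cleaner} than in Lemma \ref{lema.area_proportion_bound}: on the $Q'$ side there is no $\eta$-step to account for, since $\eta$ was needed precisely to reconcile the almost-parabolic distribution $\{s_j\}$ with the uniform one $\{s_j'\}$, and we are already on the uniform side. So the chain is simply: (i) $D_a\cup D_b$ always contains a half-disk of radius $1/2$, hence has area $\geq \pi/4$; (ii) $p$ is fixed once and for all and equals identity on $[-1,1]$, so $p^{-1}(D_a\cup D_b)$ has area bounded below by a universal constant $A_1$; (iii) $\phi'$ is affine on the horizontal coordinate and affine on each vertical fiber, so by Fubini it preserves area proportions between $Q'$ and the standard square $S$, giving $\Area(E) \geq (A_1/\Area(S))\,\Area(Q') =: \lambda\,\Area(Q')$.

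The only subtlety to watch is the pointwise (rather than ess-sup) passage of dilatation bounds through compositions in step one; but since all of $\phi, \phi', p, \eta$ have essentially bounded $K_\cdot(z)$, this is routine. No new geometric control beyond Lemma \ref{lema.geometric_control} and Lemma \ref{lema.eta_distortion} is needed, and the statement for $\Y(h)^{-1}$ follows exactly as for $\Y(h)$.
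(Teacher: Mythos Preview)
Your proposal is correct and matches the paper's intended approach exactly --- the paper does not give a separate proof for this lemma but only remarks that it follows ``through the exact same arguments'' as Lemma \ref{lema.area_proportion_bound}. Your observation that the $\eta$-step drops out on the $\Gamma'$ side (since the pullback to $Q'$ goes through $p\circ\phi'$ rather than $\eta\circ p\circ\phi$) is a correct and pleasant simplification that the paper leaves implicit.
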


\begin{lema}\label{lema.rotation_aprioribounds}

    Let $\theta \in \R\setminus \Q$, $R_\theta$ be the rotation by $\theta$ on $\R/\Z$, and $I'_n := [1, R_{\theta}^{q_n}(1)]$ the interval of $n^{\text{th}}$ return to $1$. Let also $\mathcal{Q}'$ be the adapted dynamical partition for the rotation $R_\theta$. Then, for any interval $I$ in $\mathcal{Q}'_n$, one has
    \[ \frac{1}{q_{n+1}^2} \lesssim |I'_n| \leq |I| \lesssim \frac{1}{q_n}. \]
    
\end{lema}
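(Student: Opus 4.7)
My plan is to derive all three bounds from a single classical input, the best-approximation inequality
$$\frac{1}{q_{n+1}+q_n}\le \|q_n\theta\|\le \frac{1}{q_{n+1}},$$
where $\|\cdot\|$ denotes the distance to the nearest integer. This is the rotation-number analogue of the Herman--\'Swi\c{a}tek real a priori bounds used for $\mathcal{Q}_n$: instead of the exponential bounds $\varepsilon^n\le |I|\le \sigma^n$ from Corollary \ref{cor.aprioribounds}, one gets polynomial bounds in $q_n$, reflecting the absence of dynamical contraction for rigid rotations.

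First I would recall the structure of $\mathcal{Q}'_n$, which is set up exactly as $\mathcal{Q}_n$ in the preliminaries but with $f$ replaced by the translation $T_\theta$. Projected to $\R/\Z$, its intervals are of two types: \emph{short} intervals, which are pure closest returns of time $q_{n-1}$ of length $\|q_{n-1}\theta\|$; and \emph{long} intervals, which are unions of two adjacent closest returns of times $q_n$ and $q_{n-1}$ of total length $\|q_n\theta\|+\|q_{n-1}\theta\|$. In particular, $I'_n$ is itself a closest return of time $q_n$, so $|I'_n|=\|q_n\theta\|$, while every interval of $\mathcal{Q}'_n$ has length at least $\|q_{n-1}\theta\|$ and at most $2\|q_{n-1}\theta\|$.

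With these two ingredients, the three desired estimates are one-liners. For the upper bound, each $I\in\mathcal{Q}'_n$ satisfies $|I|\le 2\|q_{n-1}\theta\|\le 2/q_n$, directly from the best-approximation inequality applied to $q_{n-1}$. For the middle inequality $|I'_n|\le|I|$, I would use the monotonicity $\|q_n\theta\|<\|q_{n-1}\theta\|$ (which follows immediately from best-approximation together with $q_n<q_{n+1}$) and combine it with the lower estimate on $|I|$ just noted. Finally, the lower bound is obtained by chaining $|I'_n|=\|q_n\theta\|\ge 1/(q_{n+1}+q_n)\ge 1/(2q_{n+1})\gtrsim 1/q_{n+1}^2$.

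No serious obstacle stands in the way: the lemma amounts to translating the best-approximation inequality through the combinatorial description of $\mathcal{Q}'_n$ already developed. The only small judgment call is why the statement gives the weak polynomial lower bound $1/q_{n+1}^2$ instead of the sharper $\|q_n\theta\|\asymp 1/q_{n+1}$; I expect this is the form that pairs cleanly with the $\log^2 a_{n+1}$ distortion bound of Theorem \ref{teo.yoccoz_distortion} when both are combined in the proof of Theorem \ref{teo.main}.
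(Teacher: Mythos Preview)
Your proposal is correct and follows essentially the same route as the paper: both arguments read off the lengths of the two types of intervals in $\mathcal{Q}'_n$ as $\|q_{n-1}\theta\|$ and $\|q_n\theta\|+\|q_{n-1}\theta\|$, and then feed in the classical best-approximation inequalities for $\|q_m\theta\|$. The only cosmetic differences are that the paper writes $|q_n\theta-p_n|$ rather than $\|q_n\theta\|$, uses the (weaker but sufficient) bound $|q_n\theta-p_n|\ge 1/(q_n(q_{n+1}+q_n))$ for the lower estimate, and leaves the middle inequality $|I'_n|\le|I|$ implicit, whereas you spell it out.
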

\begin{proof}

    The lower estimate comes from the fact that
    \[ |I'_n| = |q_n\theta - p_n| \geq \frac{1}{q_n(q_{n+1} + q_n)} \geq \frac{1}{2q_{n+1}^2}. \]
    For the upper estimate, recall that an interval in $\mathcal{Q}'_n$ is either of the form $[R_\theta^{-j}(1), R_\theta^{-j-q_{n-1}}(1)]$, $0 \leq j < q_n$, in which case it is an $(n-1)^{\text{th}}$ return, or of the form $[R_\theta^{-j}(1), R_\theta^{-j-q_n}(1)]\cup [R_\theta^{-j-q_n}(1), R_\theta^{-j-q_n+q_{n-1}}(1)]$, $0 \leq j < q_{n-1}$, in which case it is the union of an $n^{\text{th}}$ return and an $(n-1)^{\text{th}}$ return. From that, we get
    \[ |I| = |q_{n-1}\theta - p_{n-1}| = q_{n-1}\left| \theta - \frac{p_{n-1}}{q_{n-1}} \right| \leq q_{n-1}\frac{1}{q_nq_{n-1}} = \frac{1}{q_n} \]
    in the first case and
    \[ |I| \leq |q_{n-1}\theta - p_{n-1}| + |q_n\theta - p_n| \leq \frac{2}{q_n} \]
    in the second case.
    
\end{proof}

Finally, the following Lemma is a refinement of the classical inequalities
\[ \prod_{k=1}^n b_k \leq q_n \leq \prod_{k=1}^n (a_k + 1), \]
which come easily by induction from the relation $q_{n+1} = a_{n+1}q_n + q_{n-1}$. The lower inequality is not useful in its current state, since it cannot see long strings of ones in the sequence $a_n$. To fix it, we will slightly addapt the sequence $a_n$.

\begin{lema}\label{lema.qn_an_comparison}

    Let $\theta = [a_1, a_2, \dots]$ and $\{ q_n \}_{n\geq 1}$ be the associated sequence of denominators from the approximants of $\theta$. Define the sequence
    \[ b_k := \begin{cases} a_k & \text{if } a_k \neq 1 \\ \frac{F_\ell}{F_{\ell - 1}} & \text{if } a_{k-i} = 1 \text{ for all } 0 \leq i < \ell \text{ and } a_{k-\ell} \neq 1 \end{cases} \]
    with $F_\ell$ the $\ell^{\text{th}}$ Fibonacci number --- starting from $F_0 = F_1 = 1$.
    Then
    \[ \prod_{k=1}^n b_k \leq q_n \leq \prod_{k=1}^n (a_k + 1). \]
    
\end{lema}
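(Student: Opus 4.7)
The plan is to dispatch the upper bound by an easy induction and prove the lower bound by grouping the indices into blocks that match the definition of the sequence $b_k$. For the upper bound, the recurrence $q_{n+1} = a_{n+1} q_n + q_{n-1} \leq (a_{n+1}+1) q_n$ (using $q_{n-1} \leq q_n$) gives $q_n \leq \prod_{k=1}^{n}(a_k + 1)$ at once by induction on $n$, starting from $q_{-1} = 0, q_0 = 1$.

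For the lower bound, I would partition $\{1,\dots,n\}$ into consecutive blocks of two kinds: singletons $\{k\}$ with $a_k \neq 1$, and maximal runs of consecutive indices on which $a_k = 1$. This is precisely the partition that the definition of $b_k$ is designed to respect: if $\{j+1, \dots, j+m\}$ is such a maximal run of $m$ ones (so either $a_j \neq 1$ or $j = 0$), the relevant product telescopes as
\[ \prod_{l=1}^{m} b_{j+l} = \prod_{l=1}^{m} \frac{F_l}{F_{l-1}} = \frac{F_m}{F_0} = F_m. \]

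I would then show block by block that the product of $b_k$'s within a block underestimates the growth of $q$ across the block. For a singleton $\{j+1\}$ with $a_{j+1} \neq 1$ this is immediate from the recurrence: $q_{j+1} \geq a_{j+1} q_j = b_{j+1} q_j$. For a run of $m$ ones at positions $j+1, \dots, j+m$, iterating the degenerate recurrence $q_{k+1} = q_k + q_{k-1}$ (valid because $a_{j+l} = 1$ for $1 \leq l \leq m$) together with a straightforward induction on $m$ gives
\[ q_{j+m} = F_m\, q_j + F_{m-1}\, q_{j-1} \geq F_m\, q_j, \]
which, combined with the telescoping identity above, yields exactly $q_{j+m} \geq \bigl(\prod_{l=1}^m b_{j+l}\bigr)\, q_j$. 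The only edge case needing a sanity check is the leading block $j = 0$: the same formula produces $q_m \geq F_m\cdot 1 + F_{m-1}\cdot 0 = F_m$ (in fact with equality), so the estimate is consistent with $q_0 = 1$.

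Multiplying these block-wise estimates and telescoping across the partition then delivers $q_n \geq \prod_{k=1}^n b_k$. I do not expect a substantive obstacle here; the only real content is the Fibonacci identity for $q$ inside a block of ones, and the rest is careful bookkeeping around block boundaries and the start of the sequence. The choice $b_k = F_\ell/F_{\ell-1}$ is engineered so that the telescoping product matches the Fibonacci growth rate inside each block of ones, which is what makes the bound tight on purely Fibonacci rotation numbers.
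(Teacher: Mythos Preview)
Your proposal is correct and follows essentially the same approach as the paper. The paper proves the lower bound by strong induction on $n$, splitting into the cases $a_n \neq 1$ (using $q_n \geq a_n q_{n-1}$) and $a_n = \dots = a_{n-\ell+1} = 1$ with $a_{n-\ell} \neq 1$ (using the identity $q_n = F_\ell q_{n-\ell} + F_{\ell-1} q_{n-\ell-1}$ together with $F_\ell = \prod_{k=n-\ell+1}^n b_k$); your block decomposition is just this induction unrolled, with the same Fibonacci identity and telescoping product as its core, and your explicit check of the edge case $j=0$ is in fact slightly more careful than the paper's treatment.
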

\begin{proof}

    As observed before, we only need to prove the left inequality; we will apply induction on the relation $q_{n+1} = a_{n+1}q_n + q_{n-1}$. First, observe that $q_1 = a_1 = b_1$. If we then know the result for all $m < n$, we consider two possible cases: either $a_n \neq 1$ and $q_n \leq a_nq_{n-1} \leq \prod_{k=1}^n b_k$; or $a_n = a_{n-1} = \dots = a_{n-\ell+1} = 1$ and $a_{n-\ell} \neq 1$, in which case we can see that
    \[ q_n = q_{n-1} + q_{n-2} = 2q_{n-2} + q_{n-3} = \dots = F_\ell q_{n-\ell} + F_{\ell-1} q_{n-\ell-1} \geq F_\ell\prod_{k=1}^{n-\ell}b_k. \]
    By the definition of $b_n$, we see that $F_\ell = \prod_{k=n-\ell+1}^n b_k$ and the result follows.
    
\end{proof}

\subsection{Proof of the main result}

Proving that $\Y(h)$ is a David map under the Petersen-Zakeri conditions involves both a control of its distortion from Theorem \ref{teo.yoccoz_distortion} and a control of the areas of the cells of $\Gamma$, which is obtained as a consequence of Theorem \ref{teo.aprioribounds}. But, in proving the optimality of the condtion for the Yoccoz extension, we need to know that the distortion of the map is not somehow concentrated in small areas of each cell, nor that the areas of the cells decay faster than expected. This is what Lemmas \ref{lema.cell_area_lower_bound} and \ref{lema.area_proportion_bound} are for.

\begin{proof}[Proof of Theorem \ref{teo.main}.a]

    The fact that $\rho(f) \in PZ$ implies that $\Y(h)$ is a David map is already known from \cite{Petersen-Zakeri2004}, but we include a proof for completeness. Let $\hat{K} \geq 1$, and assume $z \in \h$ satisfies $K_{\Y(h)}(z) > \hat{K}$. Then either $z$ is in the grid $\Gamma$, which has zero measure, or $z$ is in some cell of level $n$. Combining Theorem \ref{teo.yoccoz_distortion} with the hypothesis that $\log^2a_n = \mathcal{O}(n)$, we see that there is a universal constant $C \geq 0$ satisfying $K_{\Y(h)}(z) \leq Cn$, and in particular $n \geq \hat{K}/C$. If $n_0 = n_0(\hat{K})$ is the smallest integer such that $n_0 \geq \hat{K}/C$, we find that
    \[ \Area\{ z \in [0, 1]\times [0, 1] \ | \ K_{\Y(h)}(z) \geq \hat{K} \} \leq \Area(\text{cells of level at least }n_0). \]
    From Corollary \ref{cor.aprioribounds}, these cells reach a height of at most $\sigma^{n_0}$ (up to some constant), for some universal $\sigma \in (0, 1)$, and therefore
    \[ \Area\{ z \in [0, 1]\times [0, 1] \ | \ K_{\Y(h)}(z) \geq \hat{K} \} \lesssim \sigma^{n_0} \leq \sigma^{K/C} = e^{-\frac{\log(1/\sigma)}{C}\hat{K}}. \]
    Thus $\Y(h)$ is a David map, with some constant $A \geq 1$ and $\alpha = \log\sigma^{-1}/C$.

    Suppose now $\Y(h)$ is a David map with constants $A, \alpha > 0$. By definition, this means that
    \[ \Area\{ z \in [0,1]\times [0,1] \ | \ K_{\Y(h)}(z) \geq \hat{K} \} \leq Ae^{-\alpha \hat{K}}. \]
    Combining both Lemmas \ref{lema.cell_area_lower_bound} and \ref{lema.area_proportion_bound}, we get
    \[ \varepsilon^{2n} \lesssim \Area(Q) \leq \frac{1}{\lambda}\Area\left\{ z \in Q \ \left| \ K_{\Y(h)}(z) \geq \frac{\log^2a_{n+1}}{C} \right.\right\} \leq \frac{1}{\lambda}Ae^{-\alpha\log^2a_{n+1}/C}. \]
    Taking logarithms on both sides, we find that
    \[ \frac{\alpha}{2C}\log^2a_{n+1} - \frac{1}{2}\log\frac{A}{\lambda} + \mathcal{O}(1) \leq n, \]
    and since $A$ and $\lambda$ do not depend on $n$, we recover that $\rho(f) \in PZ$.

\end{proof}

The ideas involved in item b of Theorem \ref{teo.main} are similar to the ones used up to this point, now needing to apply some geometric control for the rotation instead of the map $f$.

\begin{proof}[Proof of Theorem \ref{teo.main}.b]

    Let us start showing that the condition is necessary. Observe that Lemma \ref{lema.area_proportion_bound_rotation} still guarantees that the area of the points in each cell $Q'$ of the grid $\Gamma'$ where the distortion of $\Y(h)^{-1}$ exceeds some $\log^2 a_{n+1}/C$ is at least a constant proportion of the total area. We also get from Lemma \ref{lema.geometric_control} that $Q'$ will contain some square of side comparable to $|I|$, where $I$ is the interval of the dynamical partition $\mathcal{Q}'_n$ that bounds $Q'$. Thus, if we assume that $\Y(h)^{-1}$ is David with constants $A, \alpha > 0$, then the lower bound from Lemma \ref{lema.rotation_aprioribounds} gets us:
    \[ Ae^{-\alpha\log^2 a_{n+1}/C} \geq \Area\left\{ K_{\Y(h)^{-1}}(z) \geq \frac{\log^2 a_{n+1}}{C} \right\} \geq \lambda\Area Q' \gtrsim \frac{1}{q_{n+1}^4} \geq \prod_{k=1}^{n+1}(a_k + 1)^{-4} \]
    and the desired conclusion follows by taking logarithms (and noticing that $\log(a_{n+1} + 1) = o(\log^2a_{n+1})$).

    Now, we will show that $\Y(h)^{-1}$ is a David map whenever
    \[ \log^2a_{n+1} \lesssim \sum_{k=1}^{n}\log b_k =: B_n. \]
    The equivalence between this condition and the one in the hypothesis is easy to verify. Let $\hat{K} \geq 1$, and assume $z \in \h$ satisfies $K_{\Y(h)^{-1}}(z) > \hat{K}$. Then either $z$ is in the grid $\Gamma'$, which has zero measure, or $z$ is in some cell of level $n$. Combining Theorem \ref{teo.yoccoz_distortion} with the hypothesis that $\log^2a_{n+1} \lesssim B_n$, we see that there is a universal constant $C \geq 0$ satisfying $K_{\Y(h)^{-1}}(z) \leq CB_n$, and in particular $B_n \geq \hat{K}/C$. If $n_0 = n_0(K)$ is the smallest integer such that $B_{n_0} \geq K/C$, we find that
    \[ \Area\{ z \in [0, 1]\times [0, 1] \ | \ K_{\Y(h)^{-1}}(z) \geq \hat{K} \} \leq \Area(\text{cells of level at least }n_0). \]
    From Lemmas \ref{lema.rotation_aprioribounds} and \ref{lema.qn_an_comparison}, these cells reach a height of at most $e^{-B_{n_0}}$ (up to some constant), and therefore
    \[ \Area\{ z \in [0, 1]\times [0, 1] \ | \ K_{\Y(h)^{-1}}(z) \geq \hat{K} \} \lesssim e^{-B_{n_0}} \leq e^{-\hat{K}/C}. \]
    Thus $\Y(h)^{-1}$ is a David map, with some constant $A \geq 1$ and $\alpha = 1/C$.

\end{proof}

By looking at our arithmetic classes, we observe an unexpected phenomenon: the condition for $\Y(h)$ to be David is strictly included in the condition for $\Y(h)^{-1}$ to be David. Indeed, if $\log^2 a_n = \mathcal{O}(n)$, then
\[ \sum_{k=1}^n \log(a_k + 1) \geq n\cdot \log 2 \gtrsim \log^2 a_n. \]
We remind the reader that \textit{biDavid maps} are David maps with David inverse.

Theorem \ref{teo.main}.b implies the following:

\begin{cor}\label{cor}

    If $\Y(h)$ is a David map, then it is actually biDavid.
    
\end{cor}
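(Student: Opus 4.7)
The plan is to chain together the two parts of Theorem \ref{teo.main} via the inclusion $PZ \subset \mathcal{A}$. By hypothesis, $\Y(h)$ is a David map, so Theorem \ref{teo.main}.a immediately converts this analytic condition into the arithmetic condition $\rho(f)\in PZ$, that is, $\log a_{n}=\mathcal{O}(\sqrt{n})$. The goal will then be to deduce the arithmetic condition defining $\mathcal{A}$, namely $\log^{2}a_{n+1}\lesssim \sum_{k=1}^{n}\log(a_{k}+1)$, and finally invoke Theorem \ref{teo.main}.b to conclude that $\Y(h)^{-1}$ is David.

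The step from $PZ$ to $\mathcal{A}$ is essentially the short computation that is already recorded in the paragraph immediately preceding the corollary: if $\log^{2}a_{n+1}=\mathcal{O}(n+1)$, then there is a constant $M$ with $\log^{2}a_{n+1}\leq M(n+1)$ for all $n$ large enough; on the other hand, every term $\log(a_{k}+1)$ is at least $\log 2$, so
\[
\sum_{k=1}^{n}\log(a_{k}+1)\geq n\log 2,
\]
which grows linearly in $n$ and hence dominates $\log^{2}a_{n+1}$ up to a multiplicative constant. This establishes $PZ\subset \mathcal{A}$ as required.

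I do not expect any genuine obstacle here, since the corollary is really just assembling results that are already in place; the only thing worth being slightly careful with is handling the finitely many small indices where the asymptotic hidden in $\mathcal{O}(\sqrt{n})$ might not yet be effective, but these can be absorbed into the comparison constant in the definition of $\mathcal{A}$. The proof will therefore consist of three one-line steps: apply Theorem \ref{teo.main}.a, apply the inclusion $PZ\subset\mathcal{A}$, and apply Theorem \ref{teo.main}.b.
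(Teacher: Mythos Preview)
Your proposal is correct and follows exactly the paper's approach: the corollary is deduced from Theorem \ref{teo.main} together with the inclusion $PZ\subset\mathcal{A}$, which is precisely the computation $\sum_{k=1}^{n}\log(a_k+1)\geq n\log 2\gtrsim \log^2 a_{n+1}$ appearing just before the statement.
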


\textbf{Remark.} Although the condition for $\Y(h)^{-1}$ to be David results in a cumbersome arithmetic class, we can compare it to other classes that are easier to understand. For that, let us denote
\[ PZ_\varepsilon := \left\{ \theta = [a_1, a_2, \dots] \in \R\setminus \Q \ \left| \ \log a_n = \mathcal{O}\left( n^{\frac{1}{2} + \varepsilon} \right) \right.\right\}, \]
and recall that we defined
\[ \mathcal{A} := \left\{ \theta = [a_1, a_2, \dots] \in \R\setminus \Q \ \left| \ \log^2 a_{n+1} \lesssim \sum_{k=1}^n \log(a_k + 1) \right.\right\}. \]
Then we find that
\[ PZ = PZ_0 \subset \mathcal{A} \subset PZ_{1/2}. \]
Indeed, the first inclusion is exactly the previous corollary, and the second comes from a simple induction argument.

Furthermore, we can check that $\mathcal{A}$ intersects $PZ_\varepsilon$ for all $0 < \varepsilon \leq 1/2$ without entirely containing these classes. Indeed, on one hand, taking $\theta = [a_1, a_2, \dots]$ with $a_n$ the integer part of $e^{n^{1/2 + \varepsilon}}$, we have $\theta \in \mathcal{A}\cap PZ_\varepsilon$. On the other hand, if we let $a_n$ be the integer part of $e^{n^{1/2 + \varepsilon}}$ when $n$ is a perfect square, and $1$ otherwise, then $\theta \in PZ_\varepsilon$ still, but $\theta \notin \mathcal{A}$. It is also worth mentioning that $\mathcal{A}$ is still included in the Brjuno class.

\subsection{Generalizations}\label{SG}

The proofs presented in the above section can be adapted to more general classes of maps, as long as they are defined through similar area controls as the David class. To make this more explicit, let us state the following:

\begin{teo}\label{gen}

    Let $F: \s^1 \to \s^1$ be a critical circle map with $\theta = \rho(f) \in \R\setminus \Q$, and let $f$ be a lift of it. Let $h: \R \to \R$ be the lift of the normalized linearizing map, which satisfies $h\circ f = T_{\theta(f)}\circ h$, where $T_{\theta}(z) = z + \theta$. Let $\Y(h): \overline{\h} \to \overline{\h}$ be the Yoccoz extension of $h$. Let $f: [1, \infty) \to [1, \infty)$ be a strictly increasing function satisfying that, for any $C \geq 1$, $f(Cx) \leq C'f(x)$, where $C'$ depends only on $C$. Then:
    \begin{itemize}
        \item [a]. there are constants $A, \alpha > 0$ satisfying
        \[ \Area\left\{ z \in [0, 1]\times [0, 1] \ | \ K_{\Y(h)}(z) \geq f(\hat{K}) \right\} \leq Ae^{-\alpha \hat{K}} \]
        for all $\hat{K} \geq 1$ if and only if
        \[ \log a_n = \mathcal{O}\left( \sqrt{f(n)} \right); \]

        \item [b]. there are constants $A, \alpha > 0$ satisfying
        \[ \Area\left\{ z \in [0, 1]\times [0, 1] \ | \ K_{\Y(h)^{-1}}(z) \geq f(\hat{K}) \right\} \leq Ae^{-\alpha \hat{K}} \]
        for all $\hat{K} \geq 1$ if and only if
        \[ \log^2 a_n \lesssim f\left( \sum_{k=1}^{n}\log(a_k + 1) \right). \]
    \end{itemize}

\end{teo}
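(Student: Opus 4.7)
The plan is to follow the arguments used for Theorem \ref{teo.main}.a and Theorem \ref{teo.main}.b almost verbatim, with the sequence $n$ (resp.\ $B_n$) that there controlled the decay of areas replaced everywhere by $f(n)$ (resp.\ $f(B_n)$). The doubling hypothesis $f(Cx)\le C' f(x)$ is precisely what makes this substitution work: it guarantees a well-behaved pseudo-inverse $f^{-1}$ for which $f^{-1}(C x)\asymp f^{-1}(x)$, so that equations of the form $f(\hat K) \asymp K_0$ can be solved for $\hat K$ up to universal constants and the comparison survives rescalings.

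\textbf{Part (a), sufficiency.} Assume $\log^2 a_n \lesssim f(n)$. If $z$ lies in a cell $Q$ of level $n$, Theorem \ref{teo.yoccoz_distortion} yields $K_{\Y(h)}(z)\lesssim \log^2 a_{n+1}\lesssim f(n+1)\lesssim f(n)$ by doubling. Hence $K_{\Y(h)}(z)\ge f(\hat K)$ forces $f(n)\gtrsim f(\hat K)$, and the doubling condition applied in the contrapositive form gives $n\gtrsim \hat K$. Letting $n_0(\hat K)$ be the smallest such level, Corollary \ref{cor.aprioribounds} controls the total area occupied by cells of level $\ge n_0$ by $\sigma^{n_0}\lesssim e^{-\alpha \hat K}$.

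\textbf{Part (a), necessity.} Given the area bound, fix a cell $Q$ of level $n$. Lemma \ref{lema.cell_area_lower_bound} gives $\Area(Q)\gtrsim \varepsilon^{2n}$ and Lemma \ref{lema.area_proportion_bound} gives that a fixed proportion of $Q$ has $K_{\Y(h)}\ge \log^2 a_{n+1}/C$. Pick $\hat K$ so that $f(\hat K)\asymp \log^2 a_{n+1}/C$ (existence and comparability up to universal constants of such a $\hat K$ is where the doubling property of $f$ is used). Substituting into the hypothesis gives $\varepsilon^{2n}\lesssim A e^{-\alpha \hat K}$, and taking logarithms yields $\hat K\lesssim n$. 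Applying $f$ to both sides and using doubling once more gives $\log^2 a_{n+1}\lesssim f(n)$, i.e.\ $\log a_n=\mathcal{O}(\sqrt{f(n)})$.

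\textbf{Part (b).} For sufficiency, follow the proof of Theorem \ref{teo.main}.b with $B_n=\sum_{k=1}^{n}\log b_k$: by Lemmas \ref{lema.rotation_aprioribounds} and \ref{lema.qn_an_comparison}, cells of level $\ge n_0$ in $\Gamma'$ have heights $\lesssim 1/q_{n_0}\le e^{-B_{n_0}}$, so their total area is $\lesssim e^{-B_{n_0}}$. Combining the hypothesis $\log^2 a_{n+1}\lesssim f(B_n)$ with Theorem \ref{teo.yoccoz_distortion}, a cell of level $n$ where $K_{\Y(h)^{-1}}\ge f(\hat K)$ satisfies $f(B_n)\gtrsim f(\hat K)$, hence $B_n\gtrsim \hat K$ by doubling. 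This produces the desired $e^{-c\hat K}$ bound. For necessity, a cell $Q'$ of $\Gamma'$ at level $n$ contains a square of side $\asymp |I|\gtrsim 1/q_{n+1}^{2}$ (by Lemmas \ref{lema.geometric_control} and \ref{lema.rotation_aprioribounds}), and so $\Area(Q')\gtrsim q_{n+1}^{-4}\ge \prod_{k=1}^{n+1}(a_k+1)^{-4}$. Applying the hypothesized area bound to the region provided by Lemma \ref{lema.area_proportion_bound_rotation}, and choosing $\hat K$ with $f(\hat K)\asymp \log^2 a_{n+1}/C$, one obtains $\hat K\lesssim \sum_{k=1}^{n+1}\log(a_k+1)$, and inverting through $f$ gives $\log^2 a_{n+1}\lesssim f\!\left(\sum_{k=1}^{n}\log(a_k+1)\right)$ (the shift from $n+1$ to $n$ in the sum being absorbed by doubling, since the dropped term $\log(a_{n+1}+1)$ is at worst $o(\log^2 a_{n+1})$).

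\textbf{Main obstacle.} All of the dynamical ingredients (the \textit{a priori} bounds, the almost-parabolic estimate, the distortion of the Yoccoz extension and the constant-proportion lemmas \ref{lema.area_proportion_bound}, \ref{lema.area_proportion_bound_rotation}) are already in place, so the proof is not genuinely new dynamics; the only real work is the arithmetic bookkeeping around $f$ and $f^{-1}$. The somewhat delicate step is verifying that the doubling hypothesis really is strong enough to let one freely exchange $f(c\hat K)\leftrightarrow f(\hat K)$ and solve $f(\hat K)\asymp K_0$ with universal constants, so that absolute constants absorbed at each intermediate step do not accumulate into something $n$-dependent. Once this is granted, both directions of both items reduce to the same pair of estimates as Theorem \ref{teo.main}.
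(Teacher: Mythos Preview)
Your proposal is correct and follows essentially the same approach as the paper: both directions of (a) and (b) combine Theorem~\ref{teo.yoccoz_distortion} with the area lemmas \ref{lema.cell_area_lower_bound}--\ref{lema.qn_an_comparison} exactly as you describe, invoking the doubling hypothesis on $f$ at precisely the same junctures to pass between $f(\hat K)$ and $f(n)$ (respectively $f(B_n)$). The paper's argument is in fact slightly terser than yours---it writes the necessity step directly via $f^{-1}$ rather than by choosing $\hat K$ with $f(\hat K)\asymp\log^2 a_{n+1}/C$, and it does not separately discuss the index shift in part (b)---but the content is identical, and your flagging of the inversion step as the one place where the doubling hypothesis does real work is exactly right.
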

\begin{proof}

    Let us assume $\Y(h)$ and $\Y(h)^{-1}$ satisfy the area controls of items a and b. Then, according to Lemmas \ref{lema.cell_area_lower_bound}, \ref{lema.area_proportion_bound}, \ref{lema.area_proportion_bound_rotation}, \ref{lema.rotation_aprioribounds}, \ref{lema.qn_an_comparison}, for values of $n$ with $a_{n+1}$ large enough, there are cells $Q, Q'$ of $\Gamma, \Gamma'$, respectively, and a constant $C \geq 1$, such that
    \begin{align*}
        Ae^{-\alpha f^{-1}\left( \frac{\log^2 a_{n+1}}{C} \right)} & \geq \Area\left\{ z \in Q \ \left| \ K_{\Y(h)}(z) \geq \frac{\log^2a_{n+1}}{C} \right.\right\} \\
         & \geq \lambda\Area(Q) \gtrsim \varepsilon^{2n} \implies \\
        \implies f^{-1}\left( \frac{\log^2 a_{n+1}}{C} \right) & \lesssim n + 1 \implies \log^2 a_{n+1} \lesssim f(n + 1),
    \end{align*}
    and
    \begin{align*}
        Ae^{-\alpha f^{-1}\left( \frac{\log^2 a_{n+1}}{C} \right)} & \geq \Area\left\{ z \in Q' \ \left| \ K_{\Y(h)^{-1}}(z) \geq \frac{\log^2a_{n+1}}{C} \right.\right\} \\
         & \geq \lambda\Area(Q') \gtrsim \prod_{k=1}^{n+1}(a_k + 1)^{-4} \implies \\
        \implies f^{-1}\left( \frac{\log^2 a_{n+1}}{C} \right) & \lesssim \sum_{k=1}^{n+1}\log(a_k + 1) \implies \log^2 a_{n+1} \lesssim f\left( \sum_{k=1}^{n+1}\log(a_k + 1) \right).
    \end{align*}

    On the other hand, if $\theta$ satisfies the condition $\log a_n = \mathcal{O}(\sqrt{f(n)})$, we take $\hat{K} \geq 1$ and observe that, if there is a point $z$ in a cell of level $n$ satisfying $K_{\Y(h)}(z) \geq f(\hat{K})$, then $f(\hat{K}) \leq C\log^2 a_{n+1}$, for some universal $C \geq 1$, from Theorem \ref{teo.yoccoz_distortion}, which in turn implies that $f(\hat{K}) \leq C'f(n)$, for some universal $C' \geq 1$, from the hypothesis. By the condition imposed on $f$, there is $C'' \geq 1$ also universal satisfying $\hat{K} \leq C''n$. Thus, taking $n_0 = n_0(\hat{K})$ the smallest integer such that $n_0 \geq \hat{K}/C''$, we have
    \[ \Area\{ z \in [0, 1]\times [0, 1] \ | \ K_{\Y(h)} \geq f(\hat{K}) \} \leq \Area(\text{cells of level at least }n_0) \lesssim \sigma^{n_0} \leq \sigma^{\hat{K}/C''} \]
    which is the area control we wanted, with some $A \geq 1$ and $\alpha = \log(1/\sigma)/C''$.

    The argument for $\Y(h)^{-1}$ is perfectly analogous.
    
\end{proof}

We can now apply this theorem to two classes of maps.

\begin{deft}
    A homeomorphism $H: \h \to \h$ is \textit{strong David} if there are $A, \alpha > 0$ such that
    \[ \Area\{ z \in [0,1]\times [0,1] \ | \ K_H(z) \geq K \} \leq Ae^{-\alpha e^K}, \]
    which can be rewritten as
    \[ \Area\{ z \in [0,1]\times [0,1] \ | \ K_H(z) \geq \log K \} \leq Ae^{-\alpha K}. \]
\end{deft}

As a consequence of Theorem \ref{gen} we obtain the following

\begin{cor}\label{SD}
    $\Y(h)$ is strong David if and only if $\log a_n = \mathcal{O}(\sqrt{\log n})$, and the inverse $\Y(h)^{-1}$ is strong David if and only if 
    \[ \log^2 a_{n+1} \lesssim \log\sum_{k=1}^{n+1} \log(a_k + 1). \]
\end{cor}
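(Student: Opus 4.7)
The plan is to read Corollary \ref{SD} as a direct application of Theorem \ref{gen} with $f$ essentially equal to the logarithm. Concretely, I would take $f(x) := \log x + 1$ on $[1, \infty)$, which is strictly increasing from $[1,\infty)$ onto $[1,\infty)$, as required. The scaling hypothesis of Theorem \ref{gen} is immediate: for any $C \geq 1$,
\[ f(Cx) = \log C + \log x + 1 \leq (1 + \log C)\,f(x), \]
since $\log x + 1 \geq 1$, so one may take $C' = 1 + \log C$.

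Next, I would match the area control appearing in Theorem \ref{gen} with the defining inequality of the strong David class. Reparameterizing $K = f(\hat K) = \log \hat K + 1$, i.e.\ $\hat K = e^{K-1}$, the estimate $\Area\{K_{\Y(h)} \geq f(\hat K)\} \leq A e^{-\alpha \hat K}$ becomes $\Area\{K_{\Y(h)} \geq K\} \leq A e^{-(\alpha/e)\, e^{K}}$, which is exactly the strong David condition with exponential constant $\alpha/e$. The converse direction is the same change of variable run backwards, producing the form in Theorem \ref{gen} with exponential constant $\alpha e$. The identical reasoning applies to $\Y(h)^{-1}$ via Theorem \ref{gen}.b.

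The conclusions of Theorem \ref{gen} now translate directly. Part (a) yields that $\Y(h)$ is strong David if and only if
\[ \log a_n = \mathcal{O}\!\left( \sqrt{f(n)} \right) = \mathcal{O}\!\left( \sqrt{\log n + 1} \right) = \mathcal{O}\!\left( \sqrt{\log n} \right), \]
since $\sqrt{\log n + 1} \asymp \sqrt{\log n}$ for $n$ large. Part (b) yields that $\Y(h)^{-1}$ is strong David if and only if
\[ \log^2 a_{n+1} \lesssim f\!\left( \sum_{k=1}^{n+1} \log(a_k + 1) \right) = \log\!\sum_{k=1}^{n+1} \log(a_k + 1) + 1, \]
which is equivalent to the inequality in the statement once the additive constant is absorbed.

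I do not anticipate a real obstacle; the only thing that needs attention is the harmless modification of $f$ near $x=1$ (shifting $\log$ by $+1$) needed to land inside the hypotheses of Theorem \ref{gen}. That modification is absorbed by the asymptotic symbols $\mathcal{O}$ and $\lesssim$, so it does not affect the final arithmetic conditions.
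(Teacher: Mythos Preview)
Your proposal is correct and is exactly the approach the paper intends: the paper states Corollary \ref{SD} as an immediate consequence of Theorem \ref{gen} applied with $f$ essentially equal to the logarithm, and you have filled in precisely the details (the $+1$ shift to meet the hypotheses, the scaling check, and the reparametrization matching the strong David area control) that the paper omits.
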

 It is clear that $\Y(h)^{-1}$ is strong David when $\Y(h)$ is, but the converse must also be true, since strong David maps are closed under taking inverses \cite{David1988}. It is also possible to check this fact directly by proving that the arithmetic classes above are actually the same.

\begin{deft}
    A homeomorphism $H: \h \to \h$ is of \textit{finite distortion} if there are $A, \alpha > 0$ and $v$ some concave function satisfying $\int_1^\infty \diff t/tv(t) = \infty$ such that
    \[ \Area\{ z \in [0,1]\times [0,1] \ | \ K_H(z) \geq Kv(K) \} \leq Ae^{-\alpha K}. \]
\end{deft}

As a consequence of Theorem \ref{gen} we obtain the following:

\begin{cor}\label{FD}
    $\Y(h)$ is of finite distortion if and only if $\log a_n = \mathcal{O}(\sqrt{nv(n)})$ for some such $v$, while $\Y(h)^{-1}$ is of finite distortion if and only if
    \[ \log^2 a_{n+1} \lesssim \left( \sum_{k=1}^{n+1} \log(a_k + 1) \right)\cdot v\left( \sum_{k=1}^{n+1} \log(a_k + 1) \right). \]
\end{cor}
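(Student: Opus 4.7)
The plan is to deduce Corollary \ref{FD} as an immediate application of Theorem \ref{gen} with the specific choice $f(K) := K\,v(K)$. With this choice, the area-decay inequality appearing in the definition of a finite-distortion map is literally the area-decay hypothesis of Theorem \ref{gen}. Thus all that is needed is to check that $f$ satisfies the two structural assumptions imposed on the auxiliary function in Theorem \ref{gen}: namely that $f:[1,\infty) \to [1,\infty)$ is strictly increasing, and that $f(CK) \leq C'(C)\,f(K)$ for all $C \geq 1$, with $C'$ depending only on $C$.

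For the monotonicity, I would first note that we may assume without loss of generality $v \geq 1$ (since distortions are always $\geq 1$, only values $v(K) \geq 1$ matter) and that $v$ is nondecreasing, by replacing $v$ with its nondecreasing envelope $\widetilde v(K) := \sup_{1 \leq t \leq K} v(t)$; this envelope is still concave, and since $v \leq \widetilde v$ makes the integrand smaller, it still satisfies $\int_1^\infty dt/(t\widetilde v(t)) = \infty$. Under this reduction, $f(K) = Kv(K)$ is a product of two nondecreasing positive functions, hence strictly increasing. For the doubling-type estimate I would use the standard fact that a concave function $v: [0, \infty) \to [0, \infty)$ with $v(0) \geq 0$ is subadditive; this gives $v(CK) \leq \lceil C\rceil\,v(K) \leq 2C\,v(K)$ for $C \geq 1$, and therefore
\[ f(CK) \,=\, CK\,v(CK) \,\leq\, 2C^{2}\,f(K). \]

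With these two checks in hand, Theorem \ref{gen} applies directly. Item (a) with $f(K) = Kv(K)$ states that the area bound $\Area\{K_{\Y(h)}(z) \geq f(\hat K)\} \leq Ae^{-\alpha \hat K}$ is equivalent to $\log a_n = \mathcal{O}(\sqrt{f(n)}) = \mathcal{O}(\sqrt{n\,v(n)})$, and item (b) says that the analogous bound for $\Y(h)^{-1}$ is equivalent to
\[ \log^2 a_{n+1} \,\lesssim\, f\!\left(\sum_{k=1}^{n+1} \log(a_k+1)\right) \,=\, \left(\sum_{k=1}^{n+1}\log(a_k+1)\right) v\!\left(\sum_{k=1}^{n+1}\log(a_k+1)\right). \]
In both equivalences the same $v$ is carried through: a $v$ witnessing finite distortion of $\Y(h)$ (resp.\ $\Y(h)^{-1}$) produces the arithmetic inequality for the same $v$, and conversely an arithmetic inequality with a given admissible $v$ produces the area decay with that same $v$, which is exactly the definition of finite distortion.

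The main (rather mild) obstacle is the verification that $f(K) = K\,v(K)$ meets the structural hypotheses of Theorem \ref{gen}; once this is established by the concavity argument above, Corollary \ref{FD} is a purely formal consequence of Theorem \ref{gen}, with no additional dynamical or geometric input.
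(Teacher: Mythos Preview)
Your approach is exactly the paper's: Corollary \ref{FD} is stated there as a direct consequence of Theorem \ref{gen} with $f(K)=Kv(K)$, with no proof given beyond that remark, and you have correctly filled in the verification that this $f$ satisfies the structural hypotheses. (A minor aside: the envelope step is unnecessary, since a concave function that stays positive on $[1,\infty)$ is automatically nondecreasing---and the integrand-comparison you invoke there actually runs the wrong way, though this is moot for the same reason.)
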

This regularity class is used in \cite{Shen2018} to generalize the Petersen-Zakeri result, and in particular the arithmetic class there considered was exactly the optimal one for the Yoccoz extension. \\

\textbf{Remark.} One can find an example where $\Y(h)^{-1}$ is David, but $\Y(h)$ is not even a finite distortion map: define $a_n$ to be the integer part of $e^{\sqrt{n\log n}}$ when $n$ is a perfect square and $1$ otherwise. That gives us that, on one hand,
\[ \sum_{k=1}^n \log(a_k + 1) \lesssim \int_0^{\sqrt{n} + 1} k\sqrt{\log k}\diff k + n\log 2 \asymp n\sqrt{\log n}, \]
and on the other hand $\log^2 a_n \asymp n\log n$.

\subsection{A Remark on Optmality}\label{optimality}

Let us begin by stating the following conjecture made by Petersen and Zakeri in \cite{Petersen-Zakeri2004}
\begin{conjecture}
    If $F:\s^1\to \s^1$ is a critical circle mapping with rotation number $\theta$ and if the normalized linearizing map $H:\s^1\to \s^1$, which satisfies $H \circ F = R_{\theta} \circ H$, admits a David extension $\hat{H} : \overline{\D}\to \overline{\D}$, then $\theta\in PZ$.
\end{conjecture}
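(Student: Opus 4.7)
The plan is to establish a universal distortion-concentration property: any David extension $\hat H$ of $h$, regardless of how it is constructed, must have dilatation of order at least $\log^2 a_{n+1}$ on a fixed proportion of each $n$-level cell near $\s^1$. Once such an extension-independent analogue of Lemma~\ref{lema.area_proportion_bound} is available, the counting argument from the proof of Theorem~\ref{teo.main}.a transfers verbatim and forces $\theta \in PZ$.

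The starting observation is that $h$ sends the almost-parabolic partition $\mathcal{Q}_n$ to the rotationally uniform partition $\mathcal{Q}'_n$, and by comparing Lemma~\ref{teo.yoccoz_almost-parabolic_bound} with Lemma~\ref{lema.uniform_bound_rotation}, the ratio between central and peripheral sub-intervals of a closest-return interval $I \in \mathcal{Q}_n$ is distorted by a factor $\asymp a_{n+1}^2$. For each such $I$ I would introduce a Carleson-type box $Q_I \subset \overline{\D}$ based on $I$ and its two neighbors, with top side at Euclidean distance $\sim |I|$ from $\s^1$. A length-area argument adapted to the David setting, applied to the curve family separating the central sub-intervals of $I$ from the top of $Q_I$, should yield an integral lower bound of the form
\[ \iint_{Q_I} K_{\hat H}(z)\,dA(z) \gtrsim |I|^2\,\log a_{n+1}. \]

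The next step is to promote this $L^1$ bound to the pointwise, area-proportional bound $K_{\hat H} \gtrsim \log^2 a_{n+1}$ on a definite fraction of $Q_I$. The natural route is to iterate the estimate through the almost-parabolic nest: within $I$ the sub-intervals at level $n+1$ reproduce the same distortion pattern on disjoint sub-boxes, producing a hierarchy of $\asymp \log a_{n+1}$ nested modulus constraints. The exponential decay hypothesis of the David condition then prevents the dilatation from being made cheaply large on thin sets, and the Teichm\"{u}ller--Strebel-type optimality --- which for quasiconformal maps realizes a logarithmic welding of depth $\log a$ with dilatation of order $\log^2 a$ --- should self-amplify the integral bound into the pointwise one. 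Summing over the $\asymp q_n$ cells of level $n$ and invoking the David bound, one obtains $\varepsilon^{2n} \lesssim e^{-\alpha\log^2 a_{n+1}/C}$, that is, $\log^2 a_{n+1} \lesssim n$.

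The principal obstacle is precisely this upgrade from an integral modulus estimate to an area-proportional pointwise lower bound, without exploiting any explicit formula for $\hat H$. For the Yoccoz extension this was automatic, since its dilatation is essentially constant on a half-disk of fixed proportion inside each cell, but for a general David extension the dilatation could \emph{a priori} be concentrated on arbitrarily thin sets. Circumventing this will likely require either a refined extremal-length argument adapted to the nested almost-parabolic geometry, or a rigidity statement asserting that any David homeomorphic extension of $h$ must, after post-composition with a uniformly quasiconformal self-map of $\D$, coincide with the Yoccoz extension on a dynamically meaningful subset of $\D$. Such a rigidity statement would be of independent interest and lies well beyond the current boundary-characterization theory for David mappings.
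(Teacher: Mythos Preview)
The statement you are attempting to prove is not proved in the paper: it is stated there as an open conjecture (the Petersen--Zakeri conjecture), and the paper's contribution is precisely Theorem~\ref{teo.main}, which establishes the conclusion only for the \emph{specific} Yoccoz extension $\Y(h)$, not for an arbitrary David extension $\hat H$. In Section~\ref{optimality} the authors explicitly remark that if one could show the Yoccoz extension is extremal among all extensions of $h$, the conjecture would follow from Theorem~\ref{teo.main}, but they note that no such extremality notion is available; they content themselves with Proposition~\ref{qs}, a lower bound on the quasisymmetry constants of the piecewise-affine approximants $h_n$, as circumstantial evidence that the Yoccoz extension outperforms the classical Beurling--Ahlfors and Douady--Earle extensions.

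Your proposal is therefore not a proof but a research outline, and you have correctly located the genuine obstruction yourself: the passage from an $L^1$ (integral/modulus) lower bound on $K_{\hat H}$ over a Carleson box to a pointwise lower bound on a set of definite proportion. For the Yoccoz extension this step is free because the dilatation is explicitly constant on the half-disks $D_a, D_b$ (Lemmas~\ref{lema.strebel} and~\ref{lema.area_proportion_bound}); for a general David $\hat H$ nothing prevents the dilatation from concentrating on null or thin sets, and the David area bound controls superlevel sets from above, not from below. Your suggested remedies --- a nested extremal-length argument through the almost-parabolic hierarchy, or a rigidity statement relating an arbitrary $\hat H$ to $\Y(h)$ up to bounded distortion --- are reasonable directions, but as you acknowledge, the latter lies beyond current boundary theory for David maps, and the former would need a genuinely new idea to produce the required $\log^2 a_{n+1}$ pointwise bound rather than the $\log a_{n+1}$ integral bound that a single modulus estimate naturally yields. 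In short: there is no proof in the paper to compare against, and your proposal, while well-motivated, does not close the gap you yourself identify.
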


If it was possible to prove that the Yoccoz extension is somehow extremal among extensions of the Yoccoz conjugator, one could conclude the conjecture as a direct consequence of Theorem \ref{teo.main}. Since such a notion has no standard definition, however, we wish to compare the Yoccoz extension with other possible extensions of $h$. Specifically, we will consider the maps $h_n$ which are the piece-wise affine approximation of $h$ at level $n$ --- i.e. $h_n(x) = h(x)$ for all $x \in \mathcal{Q}_n$ and $h_n$ is affine between these points; see Figure \ref{fig:yoccoz-approximant}. It turns out that one can estimate their quasisymmetry constants in terms of coefficients $a_n$.

\begin{figure}
    \centering
    \includegraphics[width=0.9\linewidth]{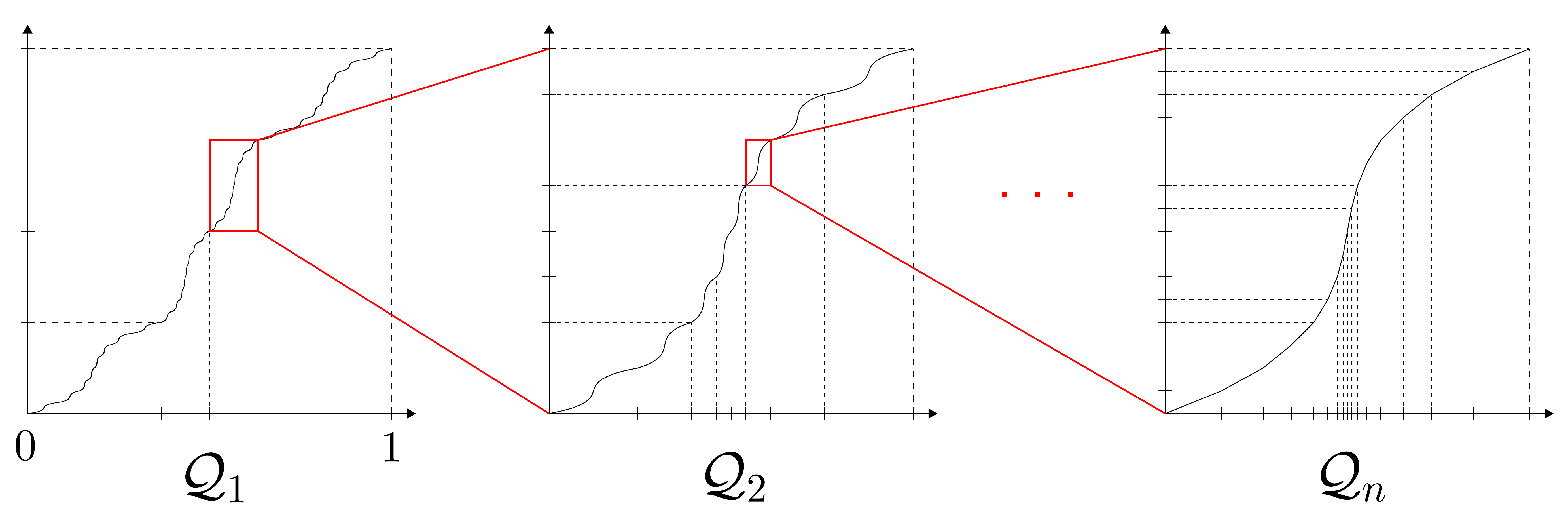}
    \caption{The graph of a map $h_n$ and zoomed in sections emphasizing the distribution of points given by the Yoccoz almost-parabolic inequality.}
    \label{fig:yoccoz-approximant}
\end{figure}

\begin{prop}\label{qs} Let $h_n$ be the piece-wise affine approximation of $h$. Then 
    \[ \|h_n\|_{qs} \gtrsim \max_{1\leq m \leq n} a_m. \]
\end{prop}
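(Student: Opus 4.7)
The plan is to exhibit, for each $m$ with $1 \le m \le n$, an explicit pair $(x, t)$ at which the quasisymmetry quotient of $h_n$ has size of order $a_m$. The lower bound comes from the asymmetry between the almost-parabolic concentration of $f$'s dynamical partition near the endpoints of each level-$(m-1)$ interval (Lemma~\ref{teo.yoccoz_almost-parabolic_bound}) and the uniform spacing of the rotation's partition (Lemma~\ref{lema.uniform_bound_rotation}).

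Fix $m \leq n$ with $a_m$ above a universal threshold (otherwise $\max_{1\le m\le n} a_m$ is already bounded and the conclusion is absorbed into the implicit constant). Let $I = [t_0, t_k]$ be an interval of $\mathcal{Q}_{m-1}$ with subdivision $\mathcal{Q}_m \cap I = \{t_0 < t_1 < \dots < t_k\}$, where $k \in \{a_m, a_m + 1\}$; set $t'_j := h(t_j)$ and $I' := h(I)$. Since $\mathcal{Q}_m \subseteq \mathcal{Q}_n$, we have $h_n(t_j) = t'_j$, and the $t'_j$ are uniformly distributed in $I'$. Choose $x := t_1$ and $t := t_1 - t_0$. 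Then by Lemma~\ref{lema.uniform_bound_rotation},
\[
h_n(x) - h_n(x - t) \;=\; t'_1 - t'_0 \;\asymp\; \frac{|I'|}{k}.
\]
For the right-hand increment, a case analysis on whether $x + t = 2t_1 - t_0$ lies inside $I$ shows $h_n(x + t) - h_n(x) \asymp |I'|$: if $x + t \ge t_k$, monotonicity of $h_n$ gives $h_n(x+t) \ge t'_k$ directly and $t'_k - t'_1 \asymp |I'|$; if instead $x + t \in [t_{j^*}, t_{j^*+1}] \subseteq I$, the key claim is that $j^* \ge c'k$ for a universal $c' > 0$, from which $h_n(x + t) \ge t'_{j^*} \asymp (j^*/k)|I'| \asymp |I'|$. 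Taking the ratio, the quasisymmetry quotient at $(x, t)$ is $\asymp k \asymp a_m$; maximizing over $m \le n$ yields the proposition.

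The main technical obstacle is the quantitative estimate $j^* \ge c'k$ in the interior case. Unpacking Yoccoz's bound gives the explicit asymptotics $t_1 - t_0 \approx (3/\pi^2)|I|$ and $t_{k/2} - t_0 \to |I|/2$ as $k \to \infty$, so $2(t_1 - t_0) \approx (6/\pi^2)|I|$ lies past the midpoint of $I$ with room on both sides; the slowly-decaying tails $\sum_{i=r+1}^{k-1}1/(k+1-i)^2 \asymp 1/r$ past the midpoint then force $j^* = k - O(1)$. Keeping track of these explicit constants is routine but tedious, and in the knife-edge case where the universal constants in the almost-parabolic bound would give $t_1 - t_0 \ge |I|/2$, one simply lands in the ``exterior'' case $x + t \ge t_k$ above, so the same $\asymp k$ ratio follows. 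Equivalently, one can replace $x - t$ by the adjacent level-$m$ partition point inside the level-$(m-1)$ interval to the left of $I$; on it the slope of $h_n$ remains comparable to $|I'|/(k|I|)$ by Theorem~\ref{teo.aprioribounds}, so the argument goes through with universal constants.
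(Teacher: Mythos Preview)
Your overall plan---exploit the contrast between the almost-parabolic concentration of $\mathcal{Q}_m$ inside a level-$(m-1)$ interval and the uniform spacing on the rotation side---is exactly right, and it is the same plan the paper follows. The gap is in the execution: the ``key claim'' that $j^* \ge c'k$ (indeed, that $j^* = k - O(1)$) does not follow from Lemma~\ref{teo.yoccoz_almost-parabolic_bound}. That lemma only asserts
\[
\frac{c\,|I|}{\min\{j,k+1-j\}^2} \;\le\; t_j - t_{j-1} \;\le\; \frac{C\,|I|}{\min\{j,k+1-j\}^2}
\]
for some fixed but unspecified universal constants $0 < c \le C$; it does \emph{not} give $t_1 - t_0 \sim (3/\pi^2)|I|$ or $t_{k/2} - t_0 \to |I|/2$. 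With, say, $c = 0.1$ and $C = 0.5$ (which is compatible with the constraint that the subintervals sum to $|I|$), one can have $t_1 - t_0 = 0.1|I|$ and $t_2 - t_1 = 0.125|I|$, so that $x + t = t_0 + 0.2|I| < t_2$ and $j^* = 1$. In that scenario your right-hand increment $h_n(x+t) - h_n(x)$ is bounded above by $t'_2 - t'_1 \asymp |I'|/k$, and the quasisymmetry quotient at your test pair is $\asymp 1$, not $\asymp k$. The alternative you sketch (moving $x - t$ into the neighboring level-$(m-1)$ interval) does not help: it enlarges $t$ by an amount $\asymp |I|$, but the same unspecified constants still prevent you from concluding that $x + t$ lands past a partition point of index $\asymp k$.

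The paper sidesteps this by making \emph{both} endpoints of the test interval partition points of $\mathcal{Q}_m$: with $a = \lfloor k/2 \rfloor$ it takes $t := (t_0 + t_a)/2$ and $\delta := (t_a - t_0)/2$, so that $t - \delta = t_0$ and $t + \delta = t_a$. The work is then to show that the midpoint $t$ sees only $\nu = O(1)$ partition points to its left, which follows robustly from the \emph{tail} estimate $\sum_{j>\nu}1/j^2 \to 0$---an argument that survives any choice of the comparability constants. Once $\nu = O(1)$, the left increment is $\lesssim (\nu+1)|I'|/k$ and the right increment is $\gtrsim (a - \nu)|I'|/k \asymp |I'|$, giving the desired ratio $\gtrsim a_m$. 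The fix to your argument is thus to anchor both ends of the test interval at $\mathcal{Q}_m$-points and count partition points on each side of the midpoint, rather than trying to locate the image of a doubled first gap.
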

\begin{proof}
   Let $a_m$ be the maximum element in the sequence up to term $n$. Take $I$ any interval determined by elements of $\mathcal{Q}_{m-1}$ and we denote by $t_0 < t_1 < \dots < t_k$ the elements of $\mathcal{Q}_m\cap I$ --- so $I = [t_0, t_k]$; recall that $k = a_m$ or $a_{m + 1}$. Let $a := \lfloor k/2 \rfloor$, and set
    \[ t := \frac{t_a + t_0}{2} \ \text{ and } \ \delta := \frac{t_a - t_0}{2}. \]
    Observe that $t - \delta = t_0$ and $t + \delta = t_a$. We first observe that $\#([t_0, t]\cap \mathcal{Q}_m) \asymp 1$. Indeed, if $\#([t_0, t]\cap \mathcal{Q}_m = \nu)$, then
    \[ \frac{t_a - t}{t_k - t_0} = \frac{t_{\nu + 1} - t}{t_k - t_0} + \sum_{j=\nu + 2}^{a} \frac{t_j - t_{j-1}}{t_k - t_0} \lesssim \sum_{j=\nu + 1}^{\infty} \frac{1}{j^2} \xrightarrow[\nu\to\infty]{} 0, \]
    observing that $t_{\nu + 1} - t \leq t_{\nu + 1} - t_\nu$. But from the choice of $t$, $(t_a - t)/(t_k - t_0) \asymp 1$, showing that $\nu$ cannot be arbitrarily large.

    Now we check that
    \[ \rho := \frac{h_n(t + \delta) - h_n(t)}{h_n(t) - h_n(t - \delta)} \gtrsim a_m. \]
    On one side, we have
    \[ h_n(t + \delta) - h_n(t) = t'_{\nu + 1} - h_n(t) + \sum_{j=\nu + 2}^{a} t'_j - t'_{j-1} \gtrsim \sum_{j=\nu + 2}^{a} \frac{t'_k - t'_0}{k} = \frac{a - \nu - 2}{k}(t'_k - t'_0). \]
    On the other,
    \[ h_n(t) - h_n(t - \delta) = h_n(t) - t'_\nu + \sum_{j=1}^{\nu} t'_j - t'_{j-1} \lesssim \sum_{j=1}^{\nu + 1} \frac{t'_k - t'_0}{k} = \frac{\nu + 1}{k}(t'_k - t'_0). \]
    Thus $\rho \gtrsim (a - \nu - 2)/(\nu + 1) \asymp a_m$.
\end{proof}

With such a control of the quasisymmetry constant, we can now see that a couple of usual extensions would generally result in worst distortions. For instance, if we denote by $E_{BA}$ the Beurling-Ahlfors extension, then Lehtinen showed in \cite{Lehtinen1983} that
\[ K_{E_{BA}}(g) \leq 2\| g \|_{qs} \]
for any quasisymmetric map $g: \R \to \R$. Thus, this bound would only get us a linear control of the distortion, instead of the logarithmic one that the Yoccoz extension provides. The Douady-Earle extension, here denoted $E_{DE}$, is much worse. Douady and Earle \cite{Douady-Earle1986}, and  Partyka \cite{Partyka1994} showed that
\[ K_{E_{DE}}(g) \leq G\left( \min\left\{ \| g \|_{qs}^{3/2}, 2\| g \|_{qs} - 1 \right\} \right) \]
for some explicit growth function $G: [1, \infty) \to [1, \infty)$ that grows asymptotically faster than polynomial. These are not concrete proofs of optimality, since these bounds are very general and might not be sharp in our context, but they illustrate that, by taking the underlying dynamics into consideration, the Yoccoz extension can massively improve traditional distortion controls.

\bibliographystyle{plain}
\bibliography{biblio}

\end{document}